\documentclass[a4paper,12pt]{article}
\usepackage{amsmath}
\usepackage{amsthm}
\usepackage{amssymb}
\usepackage{enumerate}
\usepackage{url}
\usepackage[utf8]{inputenc}
\usepackage{comment}

\setlength{\textwidth}{38em}
\oddsidemargin=3pt 
\addtolength{\topmargin}{-10mm}
\setlength{\textheight}{230mm}

\title{Invariance of Brownian motion associated with past and future maxima}

\author{Yuu Hariya\thanks{Supported in part by JSPS KAKENHI Grant Number  22K03330}}

\date{\empty}
\pagestyle{myheadings}

\numberwithin{equation}{section}

\theoremstyle{plain}

\newtheorem{thm}{Theorem}[section]
\newtheorem{lem}[thm]{Lemma}
\newtheorem{prop}[thm]{Proposition}
\newtheorem{cor}[thm]{Corollary}

\theoremstyle{definition}

\theoremstyle{remark}
\newtheorem{rem}[thm]{Remark}

\begin{document}

\newcommand\ND{\newcommand}
\newcommand\RD{\renewcommand}

\ND\N{\mathbb{N}}
\ND\R{\mathbb{R}}
\ND\Q{\mathbb{Q}}
\ND\C{\mathbb{C}}

\ND\F{\mathcal{F}}

\ND\kp{\kappa}

\ND\ind{\boldsymbol{1}}

\ND\al{\alpha }
\ND\la{\lambda }
\ND\La{\Lambda }
\ND\ve{\varepsilon}

\ND\ga{\gamma}

\ND\lref[1]{Lemma~\ref{#1}}
\ND\tref[1]{Theorem~\ref{#1}}
\ND\pref[1]{Proposition~\ref{#1}}
\ND\sref[1]{Section~\ref{#1}}
\ND\ssref[1]{Subsection~\ref{#1}}
\ND\aref[1]{Appendix~\ref{#1}}
\ND\rref[1]{Remark~\ref{#1}} 
\ND\cref[1]{Corollary~\ref{#1}}
\ND\eref[1]{Example~\ref{#1}}
\ND\fref[1]{Fig.\ {#1} }
\ND\lsref[1]{Lemmas~\ref{#1}}
\ND\tsref[1]{Theorems~\ref{#1}}
\ND\dref[1]{Definition~\ref{#1}}
\ND\psref[1]{Propositions~\ref{#1}}
\ND\rsref[1]{Remarks~\ref{#1}}
\ND\sssref[1]{Subsections~\ref{#1}}

\ND\pr{\mathbb{P}}
\ND\ex{\mathbb{E}}

\ND\cA{\Tilde{A}}

\ND\eqd{\stackrel{(d)}{=}}
\ND\db[1]{B^{(#1)}}

\ND\tr{\mathbb{T}}

\ND\ct{\mathcal{T}}

\ND\ctc{\mathcal{T}^{(c)}}

\ND\cg{\mathcal{G}}

\ND\cm{\mathcal{M}}

\ND\cp{\mathcal{P}}

\ND\cs{\mathcal{S}}

\ND\bc{B^{c}}

\ND\T{T}

\ND\bes{r}

\ND\id{\mathrm{Id}}

\ND{\rmid}[1]{\mathrel{}\middle#1\mathrel{}}

\def\thefootnote{{}}

\maketitle 
\begin{abstract}
Let $B=\{ B_{t}\} _{t\ge 0}$ be a one-dimensional standard Brownian motion. As an application of a recent result of ours on exponential functionals of Brownian motion, we show in this paper that, for every fixed $t>0$, the process given by 
\begin{align*}
 B_{s}-B_{t}-\Bigl| 
 B_{t}+\max _{0\le u\le s}B_{u}-\max _{s\le u\le t}B_{u}
 \Bigr| 
 +\Bigl| 
 \max _{0\le u\le s}B_{u}-\max _{s\le u\le t}B_{u}
 \Bigr| ,\quad 0\le s\le t,
\end{align*}
is a Brownian motion. The path transformation that describes the above process is proven to be an involution, commute with time reversal, and preserve Pitman's transformation. A connection with Pitman's $2M-X$ theorem is also discussed.
\footnote{Mathematical Institute, Tohoku University, Aoba-ku, Sendai 980-8578, Japan}
\footnote{E-mail: hariya@tohoku.ac.jp}
\footnote{{\itshape Keywords and Phrases}:~{Brownian motion}; {exponential functional}; {anticipative path transformation}; {Pitman's transformation}}
\footnote{{\itshape MSC 2020 Subject Classifications}:~Primary~{60J65}; Secondary~{60J55}}
\end{abstract}

\section{Introduction and main results}\label{;intro}

Let us consider a one-dimensional standard Brownian $B=\{ B_{t}\} _{t\ge 0}$ 
along with its exponential additive functional 
$A_{t},\,t\ge 0$, defined by 
\begin{align*}
 A_{t}\equiv A_{t}(B):=\int _{0}^{t}e^{2B_{s}}\,ds.
\end{align*}
This additive functional of Brownian motion has importance in a number of 
areas of probability theory such as stochastic analysis of Laplacians on 
hyperbolic spaces and pricing of Asian options in mathematical finance, and 
is also known for its close relationship with planar Brownian motion; see, e.g., 
the detailed surveys~\cite{mySI, mySII} by Matsumoto and Yor.

For every fixed $t>0$, we observed in \cite{har22+} that the Wiener measure 
on the space $C([0,t];\R )$ of real-valued continuous functions over $[0,t]$ 
is invariant under the anticipative transformation 
$\ct \equiv \ct^{t}: C([0,t];\R )\to C([0,t];\R )$ defined by  
\begin{align}\label{;defct}
 \ct (\phi )(s):=\phi _{s}-\log \left\{ 
 1+\frac{A_{s}(\phi )}{A_{t}(\phi )}\left( e^{2\phi _{t}}-1\right) 
 \right\} ,\quad 0\le s\le t,\ \phi \in C([0,t];\R ).
\end{align}
Namely, it holds that  
\begin{align}\label{;invw}
 \{ \ct (B)(s)\} _{0\le s\le t}\eqd \{ B_{s}\} _{0\le s\le t}.
\end{align}
See \cite[Theorem~1.1]{har22+}; we also append a proof for the reader's 
convenience in the appendix. Anticipative transforms of Brownian motion 
have been studied by many authors particularly in the framework of 
Malliavin calculus; one of their main concerns is absolute continuity relationships, 
or Girsanov-type formulas, for those transformations with respect to the 
underlying Wiener measure. We refer the reader to \cite[Section~6]{har22} 
and references therein for more accounts of this topic. The transformation 
$\ct $ defined in \eqref{;defct} is an instance in which we can explicitly calculate 
the corresponding Radon--Nikod\'ym density, which, in fact, turns out to be 
identically equal to $1$: for any nonnegative measurable function $F$ on 
$C([0,t];\R )$, 
\begin{align*}
 \ex \!\left[ 
 F(\ct (B))
 \right] =\ex \!\left[ 
 F(B)
 \right] .
\end{align*}
Path transformations to be studied in the sequel serve 
as another instance of such a situation. Here and below, we say that 
a function on $C([0,t];\R )$ is measurable if it is Borel-measurable with respect 
to the topology of uniform convergence in $C([0,t];\R )$.

The aim of the present paper is to understand better the aforementioned 
invariance in law of Brownian motion, which we will accomplish by incorporating a 
positive real parameter $c$ into the definition~\eqref{;defct} of $\ct $ to consider 
two extremals by tending $c\downarrow 0$ and $c\to \infty $.
In what follows, unless otherwise specified, we fix $t>0$. For every $c>0$, define 
$\ctc \equiv \ct ^{t,(c)}:C([0,t];\R )\to C([0,t];\R )$ by 
\begin{align}\label{;defctc}
 \ctc (\phi ):=\frac{1}{c}\ct (c\phi ),\quad \phi \in C([0,t];\R ).
\end{align}
Since $\ct $ is an involution and commutes with time reversal,  
each $\ctc $ is readily seen to enjoy these two properties as well; 
see \sref{;sptranss} for details. One of the main results of the paper 
is the following generalization of \eqref{;invw}:

\begin{thm}\label{;tmain1}
 For every $c>0$, we have 
 \begin{align*}
  \bigl\{ \ctc (B)(s)\bigr\} _{0\le s\le t}\eqd \{ B_{s}\} _{0\le s\le t}.
 \end{align*}
\end{thm}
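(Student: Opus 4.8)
The plan is to derive \tref{;tmain1} from the already-known invariance \eqref{;invw} by exploiting Brownian scaling, the point being that the space-rescaling $\phi \mapsto c\phi$ built into the definition \eqref{;defctc} of $\ctc$ can be traded for a time-rescaling that transports the horizon from $t$ to $c^{2}t$. In other words, I would first establish a scaling-covariance identity for the family $\ct^{t}$ as the horizon varies, and then combine the $c=1$ case of the invariance (applied at the horizon $c^{2}t$) with the elementary scaling invariance of Wiener measure.

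For the covariance identity, fix $\phi \in C([0,t];\R )$ and set $\psi _{r}:=c\,\phi _{r/c^{2}}$ for $0\le r\le c^{2}t$, so that $c\phi _{s}=\psi _{c^{2}s}$. The change of variables $v=c^{2}u$ in the defining integral gives $A_{s}(c\phi )=\int _{0}^{s}e^{2c\phi _{u}}\,du=c^{-2}\int _{0}^{c^{2}s}e^{2\psi _{v}}\,dv=c^{-2}A_{c^{2}s}(\psi )$, whence the ratio $A_{s}(c\phi )/A_{t}(c\phi )$ equals $A_{c^{2}s}(\psi )/A_{c^{2}t}(\psi )$ while $e^{2c\phi _{t}}=e^{2\psi _{c^{2}t}}$. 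Substituting these into \eqref{;defct} applied to $c\phi$ at horizon $t$, and comparing with the definition of $\ct ^{c^{2}t}$ applied to $\psi$, I expect to obtain
\[
 \ctc (\phi )(s)=\frac{1}{c}\,\ct ^{t}(c\phi )(s)=\frac{1}{c}\,\ct ^{c^{2}t}(\psi )(c^{2}s),\qquad 0\le s\le t.
\]

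Next I would specialize to $\phi =B$. By Brownian scaling, $\psi _{r}=c\,B_{r/c^{2}}$, $0\le r\le c^{2}t$, is again a standard Brownian motion, now on the horizon $c^{2}t$. Since \eqref{;invw} holds for every fixed horizon, applying it with $t$ replaced by $c^{2}t$ shows that $\ct ^{c^{2}t}(\psi )$ is a standard Brownian motion on $[0,c^{2}t]$. Scaling back, the process $s\mapsto c^{-1}\ct ^{c^{2}t}(\psi )(c^{2}s)$ is then a standard Brownian motion on $[0,t]$; by the displayed identity this process is exactly $\ctc (B)$, which yields the theorem.

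The argument is essentially the bookkeeping of two scalings, so the only genuinely delicate point is the covariance identity of the second step: one must check carefully that the space-rescaling $\phi \mapsto c\phi$ and the time-rescaling $r\mapsto r/c^{2}$ combine so as to leave the logarithmic correction term in \eqref{;defct} form-invariant, with the horizon correctly transported from $t$ to $c^{2}t$. Once this is secured, no further analysis is needed beyond invoking \eqref{;invw} at the rescaled horizon together with the scaling invariance of Wiener measure.
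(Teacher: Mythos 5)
Your proposal is correct and is essentially the paper's own proof: you trade the spatial rescaling $\phi \mapsto c\phi$ in \eqref{;defctc} for a Brownian time-rescaling (your $\psi$ is exactly the paper's $\bc _{r}=cB_{r/c^{2}}$), apply \eqref{;invw} at the horizon $c^{2}t$, and use the same change-of-variables identity $A_{s}(c\phi )=c^{-2}A_{c^{2}s}(\psi )$ to recognize the rescaled transformed path as $\ctc (B)$. There is no gap; the two arguments differ only in bookkeeping direction.
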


In order to describe two extremal cases of the above theorem with 
$c\downarrow 0$ and $c\to \infty $, we introduce two transformations 
on $C([0,t];\R )$ denoted respectively by $\cg \equiv \cg ^{t}$ and 
$\cm \equiv \cm ^{t}$: for each $\phi \in C([0,t];\R )$,
\begin{align}
 \cg (\phi )(s)&:=\phi _{s}-\frac{2s}{t}\phi _{t}, \label{;defcg}\\
 \cm (\phi )(s)&:=\phi _{s}-\phi _{t}-\Bigl| 
 \phi _{t}+\max _{0\le u\le s}\phi _{u}-\max _{s\le u\le t}\phi _{u}
 \Bigr| 
 +\Bigl| 
 \max _{0\le u\le s}\phi _{u}-\max _{s\le u\le t}\phi _{u}
 \Bigr| , \label{;defcm}
\end{align}
for $0\le s\le t$. Then the law of Brownian motion is also invariant 
under $\cg $ and $\cm $.

\begin{thm}\label{;tmain2}
 It holds that 
 \begin{align*}
  \{ \cg (B)(s)\} _{0\le s\le t}\eqd 
  \{ \cm (B)(s)\} _{0\le s\le t}\eqd 
  \{ B_{s}\} _{0\le s\le t}.
\end{align*}
\end{thm}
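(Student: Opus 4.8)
The plan is to realize both $\cg$ and $\cm$ as the two extremal limits of the one‑parameter family $\ctc$, namely $\cg$ as $c\downarrow 0$ and $\cm$ as $c\to\infty$, and then to transfer the invariance of \tref{;tmain1} to these limits. Concretely, I would reduce everything to the two pointwise convergences
\begin{align*}
 \ctc(\phi)(s)\to\cg(\phi)(s)\ \ (c\downarrow0),\qquad
 \ctc(\phi)(s)\to\cm(\phi)(s)\ \ (c\to\infty),
\end{align*}
valid for every continuous $\phi$ with $\phi_0=0$ and every fixed $s\in[0,t]$. Granting these, fix $0\le s_1<\dots<s_k\le t$. By \tref{;tmain1} the vector $(\ctc(B)(s_1),\dots,\ctc(B)(s_k))$ has the same law as $(B_{s_1},\dots,B_{s_k})$ for every $c>0$; since Brownian paths are a.s.\ continuous, the pointwise limits give a.s.\ convergence of this vector to $(\cg(B)(s_i))_i$ (resp.\ $(\cm(B)(s_i))_i$), and a.s.\ convergence forces the common law of the vectors to be shared by the limit. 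Thus the finite‑dimensional distributions of $\cg(B)$ and of $\cm(B)$ coincide with those of $B$; as $\cg(\phi)$ and $\cm(\phi)$ are continuous in $s$ for continuous $\phi$ (the running maxima $\max_{0\le u\le s}\phi_u$ and $\max_{s\le u\le t}\phi_u$ depend continuously on $s$), both $\cg(B)$ and $\cm(B)$ are a.s.\ continuous, and equality of all finite‑dimensional distributions with those of $B$ yields the asserted equality in law on $C([0,t];\R)$.

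The starting point for both limits is the rewriting, obtained from \eqref{;defct}--\eqref{;defctc} by clearing the denominator inside the logarithm,
\begin{align*}
 \ctc(\phi)(s)=\phi_s-\frac1c\log\frac{A_{[s,t]}(c\phi)+A_s(c\phi)e^{2c\phi_t}}{A_t(c\phi)},
 \qquad A_{[s,t]}(c\phi):=\int_s^t e^{2c\phi_u}\,du,
\end{align*}
whose numerator is a \emph{sum of two positive terms}; this positivity is what makes the asymptotics transparent. The limit $c\downarrow0$ is the easy one: as $\phi$ is bounded on $[0,t]$, a first‑order expansion gives $A_s(c\phi)=s+O(c)$, $A_t(c\phi)=t+O(c)$ and $e^{2c\phi_t}-1=2c\phi_t+O(c^2)$, whence $\tfrac1c\log\{1+\tfrac{A_s(c\phi)}{A_t(c\phi)}(e^{2c\phi_t}-1)\}\to\tfrac{2s}{t}\phi_t$ and thus $\ctc(\phi)(s)\to\phi_s-\tfrac{2s}{t}\phi_t=\cg(\phi)(s)$.

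The case $c\to\infty$ is the technical heart. Writing $M_s:=\max_{0\le u\le s}\phi_u$, $M_{[s,t]}:=\max_{s\le u\le t}\phi_u$ and $M_t:=\max_{0\le u\le t}\phi_u=\max(M_s,M_{[s,t]})$, Laplace's principle for continuous $\phi$ gives, for each fixed $s\in(0,t)$, $\tfrac1{2c}\log A_s(c\phi)\to M_s$, $\tfrac1{2c}\log A_{[s,t]}(c\phi)\to M_{[s,t]}$ and $\tfrac1{2c}\log A_t(c\phi)\to M_t$; since $\tfrac1{2c}\log(a+b)-\tfrac1{2c}\log\max(a,b)\in[0,\tfrac{\log2}{2c}]$ for $a,b>0$, the numerator above has exponential rate $\max(M_{[s,t]},M_s+\phi_t)$. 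Hence
\begin{align*}
 \lim_{c\to\infty}\ctc(\phi)(s)=\phi_s-2\bigl(\max(M_{[s,t]},\,M_s+\phi_t)-M_t\bigr),
\end{align*}
and it remains to check the elementary but decisive identity
\begin{align*}
 2\bigl(\max(M_{[s,t]},\,M_s+\phi_t)-M_t\bigr)
 =\phi_t+\bigl|\phi_t+M_s-M_{[s,t]}\bigr|-\bigl|M_s-M_{[s,t]}\bigr|,
\end{align*}
which follows at once from $\max(x,y)=\tfrac{x+y}2+\tfrac{|x-y|}2$ applied to both maxima; the right‑hand side is precisely $\phi_s-\cm(\phi)(s)$, so $\ctc(\phi)(s)\to\cm(\phi)(s)$. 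The endpoints are handled directly: there $\ctc(\phi)(0)=\phi_0$ and $\ctc(\phi)(t)=-\phi_t$ independently of $c$, matching $\cm(\phi)(0)=\phi_0=0$ and $\cm(\phi)(t)=-\phi_t$.

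I expect the main obstacle to be the $c\to\infty$ asymptotics, for two reasons. First, one must apply Laplace's principle carefully enough to land \emph{exactly} on the two absolute‑value terms defining $\cm$; the rewriting above, keeping the numerator a sum of two positive exponentials, is essential, since it reduces everything to the single limit $\tfrac1{2c}\log\int e^{2c\phi}\to\max\phi$ together with the $\max$‑identity, and thereby bypasses a cumbersome case analysis on the signs of $\phi_t$ and of $M_s-M_{[s,t]}$. Second, the finite‑dimensional route above needs only pointwise‑in‑$s$ convergence, so the degenerate behaviour of $A_s(c\phi)$ near $s=0$ does no harm (there it is dominated by $A_{[s,t]}(c\phi)$); should one instead insist on convergence in $C([0,t];\R)$ uniformly in $s$, the real difficulty would be to control the uniformity of the Laplace limit in the boundary layer near $s=0$ and across the crossover locus $\{\phi_t+M_s-M_{[s,t]}=0\}$, which is where the analysis becomes genuinely delicate.
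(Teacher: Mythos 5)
Your proposal is correct and follows essentially the same route as the paper: both realize $\cg $ and $\cm $ as the $c\downarrow 0$ and $c\to \infty $ limits of $\ctc $, establish the pointwise convergences (the $c\to \infty $ case via Laplace asymptotics for a sum of two positive exponential integrals together with the identity $2\max \{ a,b\} =a+b+|a-b|$), and then transfer the invariance of \tref{;tmain1} through finite-dimensional distributions, your almost-sure-convergence step being the paper's bounded convergence theorem argument in different clothing. The only (immaterial) difference is that the paper also upgrades the convergence to uniform convergence via Dini's second theorem, which, as you correctly observe, is not needed for this theorem.
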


Notice that, since the continuous process $\{ \cg (B)(s)\} _{0\le s\le t}$ 
is a centered Gaussian process, the above invariance under $\cg $ 
is immediate by observing that, for every $s_{1},s_{2}\in [0,t]$,
\begin{align*}
 &\ex \!\left[ 
 \cg (B)(s_{1})\cg (B)(s_{2})
 \right] \\
 &=\ex \!\left[ 
 B_{s_{1}}B_{s_{2}}
 \right]
 -\frac{2s_{1}}{t}\ex \!\left[ 
 B_{s_{2}}B_{t}
 \right] 
 -\frac{2s_{2}}{t}\ex \!\left[ 
 B_{s_{1}}B_{t}
 \right] 
 +\frac{4s_{1}s_{2}}{t^{2}}\ex \!\left[ 
 B_{t}^{2}
 \right] \\
 &=\min \{ s_{1},s_{2}\} -\frac{2s_{1}}{t}s_{2}-\frac{2s_{2}}{t}s_{1}
 +\frac{4s_{1}s_{2}}{t^{2}}t\\
 &=\min \{ s_{1},s_{2}\} ;
\end{align*}
yet the fact that $\cg $ is an involution and is commutative with 
time reversal, as each $\ctc $ is, is noteworthy, and 
we put those two properties of $\cg $ in \pref{;ppropcg}.

Recall the well-known Pitman's transformation, which we denote 
by $\cp $, defined on the space $C([0,\infty );\R )$ of real-valued 
continuous functions $\phi $ over $[0,\infty )$ as 
\begin{align}\label{;defcp}
 \cp (\phi )(s):=2\max _{0\le u\le s}\phi _{u}-\phi _{s},\quad s\ge 0.
\end{align}
For every $\mu \in \R $, we denote by 
$\db{\mu }=\bigl\{ \db{\mu }_{s}:=B_{s}+\mu s\bigr\} _{s\ge 0}$ the 
Brownian motion with drift $\mu $. In addition to being an involution 
and commuting with time reversal, the transformation $\cm $ 
is also seen to preserve $\cp $ restricted to $C([0,t];\R )$ (\pref{;ppropcm}). 
Together with the fact that $\cm (B)(t)=-B_{t}$ 
(see \eqref{;bvs} below), the Cameron--Martin formula entails 
the following corollary to \tref{;tmain2}: 

\begin{cor}\label{;ctmain2}
 For every $\mu \in \R $, it holds that 
 \begin{equation}\label{;jointl}
  \begin{split}
  &\left\{ 
  \bigl( 
  \db{\mu }_{s}, \cm (\db{\mu })(s), \cp (\db{\mu })(s)
  \bigr) 
  \right\} _{0\le s\le t}\\
  &\eqd \left\{ 
  \bigl( 
  \cm (\db{-\mu })(s), \db{-\mu }_{s},\cp (\db{-\mu })(s)
  \bigr) 
  \right\} _{0\le s\le t}.
  \end{split}
 \end{equation}
\end{cor}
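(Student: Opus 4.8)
The plan is to derive the joint identity~\eqref{;jointl} from the invariance in \tref{;tmain2} by a Cameron--Martin change of measure, exploiting the three structural features of $\cm $ recorded just above the statement: that $\cm $ is an involution, that it preserves Pitman's transformation (so that $\cp \bigl( \cm (\phi )\bigr) =\cp (\phi )$ for $\phi \in C([0,t];\R )$), and that $\cm (B)(t)=-B_{t}$. I would test the two laws in~\eqref{;jointl} against an arbitrary bounded measurable functional $F$ on $C([0,t];\R )^{3}$ and aim to show
\begin{align*}
 \ex \bigl[ F\bigl( \db{\mu }, \cm (\db{\mu }), \cp (\db{\mu })\bigr) \bigr]
 =\ex \bigl[ F\bigl( \cm (\db{-\mu }), \db{-\mu }, \cp (\db{-\mu })\bigr) \bigr] .
\end{align*}

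First I would use the Cameron--Martin formula to transfer the left-hand expectation, taken under the law of the drifted motion $\db{\mu }$, to an expectation under the law of the driftless motion $B$: replacing every occurrence of the path $\db{\mu }$ by $B$ and inserting the density $e^{\mu B_{t}-\mu ^{2}t/2}$, the left-hand side becomes $\ex \bigl[ F( B, \cm (B), \cp (B)) \, e^{\mu B_{t}-\mu ^{2}t/2}\bigr] $. Next I would invoke \tref{;tmain2}, namely $\cm (B)\eqd B$, in the form $\ex [\Psi (B)]=\ex [\Psi (\cm (B))]$ valid for every bounded functional $\Psi $, applied to $\Psi (\phi )=F\bigl( \phi , \cm (\phi ), \cp (\phi )\bigr) e^{\mu \phi _{t}-\mu ^{2}t/2}$. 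Evaluating $\Psi $ at $\cm (B)$ and simplifying the inner maps through the involution property $\cm (\cm (B))=B$, the preservation $\cp (\cm (B))=\cp (B)$, and the boundary value $\cm (B)(t)=-B_{t}$, the quantity turns into
\begin{align*}
 \ex \Bigl[ F\bigl( \cm (B), B, \cp (B)\bigr) \, e^{-\mu B_{t}-\mu ^{2}t/2}\Bigr] .
\end{align*}

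Finally I would recognize the weight $e^{-\mu B_{t}-\mu ^{2}t/2}$ as the Cameron--Martin density for drift $-\mu $ and run the change of measure in reverse, passing from $B$ back to $\db{-\mu }$; this converts the last display into $\ex \bigl[ F(\cm (\db{-\mu }), \db{-\mu }, \cp (\db{-\mu })) \bigr] $, which is exactly the right-hand side. Given the three properties of $\cm $, the corollary is essentially bookkeeping: the only points needing care are the precise normalization of the two Cameron--Martin densities and the correct tracking of which slot of $F$ each transformed path occupies after the substitution $B\mapsto \cm (B)$. I therefore do not expect a genuine obstacle here; the substance that makes the three coordinates line up is the joint compatibility of $\cm $ with $\cp $ and with the sign flip of the terminal value, and that content lives in the verifications $\cp (\cm (B))=\cp (B)$ and $\cm (B)(t)=-B_{t}$, which are already isolated in \pref{;ppropcm} and in the terminal-value computation referenced as \eqref{;bvs}.
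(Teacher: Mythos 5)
Your proposal is correct and follows essentially the same route as the paper's proof: both rest on \tref{;tmain2} combined with the Cameron--Martin formula, the boundary value $\cm (B)(t)=-B_{t}$ from \eqref{;bvs}, and properties \thetag{i}, \thetag{ii} of \pref{;ppropcm}. The only cosmetic difference is bookkeeping order: the paper first establishes the single-path identity $\cm (\db{\mu })\eqd \db{-\mu }$ and then upgrades it to the triple by applying the deterministic maps $\cm $ and $\cp $ to both sides, whereas you carry the three-slot functional through the change of measure from the start; the two arguments are substantively identical.
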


The case $\mu =0$ extends the second identity in law in \tref{;tmain2}. 
Moreover, as $t>0$ is arbitrarily fixed, the above corollary may also be regarded as 
an extension of the known fact 
(cf.\ \cite[Theorem~1.2${}_\mu $\thetag{ii}]{myPI}) that 
\begin{align}\label{;pinv}
 \bigl\{ 
 \cp (\db{\mu })(s) 
 \bigr\} _{s\ge 0}
 \eqd \bigl\{ 
 \cp (\db{-\mu })(s) 
 \bigr\} _{s\ge 0},
\end{align}
for which we also refer the reader to \rref{;rpinv}\thetag{2}.

In order to facilitate the reader's understanding of the 
transformation $\cm $, we will also reveal that, given 
$\phi \in C([0,t];\R )$, the transformed path $\cm (\phi )$, as well as 
$\phi $ itself, is expressed in terms of Pitman's transformation and 
the terminal value $\phi _{t}$. 

\begin{prop}\label{;pexpress}
Restricting $\cp $ to $C([0,t];\R )$, for every $\phi \in C([0,t];\R )$, we have 
\begin{align}
 \phi _{s}&=-\cp (\phi )(s)+\min \Bigl\{ 
 2\min _{s\le u\le t}\cp (\phi )(u),\cp (\phi )(t)+\phi _{t}
 \Bigr\} , \label{;iip}\\
 \cm (\phi )(s)&=-\cp (\phi )(s)+\min \Bigl\{ 
 2\min _{s\le u\le t}\cp (\phi )(u),\cp (\phi )(t)-\phi _{t}
 \Bigr\} , \label{;imp}
\end{align}
for all $0\le s\le t$.
\end{prop}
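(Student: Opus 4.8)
The plan is to reduce both identities \eqref{;iip} and \eqref{;imp} to a single computation, namely the value of the running minimum of $\cp(\phi)$ over $[s,t]$. Fixing $\phi\in C([0,t];\R)$, I would abbreviate the past and future maxima by
\[
 M_s:=\max_{0\le u\le s}\phi_u,\qquad N_s:=\max_{s\le u\le t}\phi_u,
\]
so that $\cp(\phi)(s)=2M_s-\phi_s$ and $\max_{0\le u\le t}\phi_u=M_s\vee N_s$; in particular $\cp(\phi)(t)+\phi_t=2(M_s\vee N_s)$ and $\cp(\phi)(t)-\phi_t=2(M_s\vee N_s)-2\phi_t$. Since $-\cp(\phi)(s)=\phi_s-2M_s$, both right-hand sides take the common shape $\phi_s-2M_s+\min\{2m_s,\,\cdot\,\}$ with $m_s:=\min_{s\le u\le t}\cp(\phi)(u)$, so everything hinges on evaluating $m_s$.

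I expect the key step, and the main obstacle, to be the claim
\[
 m_s=\min_{s\le u\le t}\bigl(2M_u-\phi_u\bigr)=2M_s-(M_s\wedge N_s),
\]
which is the only point that is not pure algebra. First, $2M_u-\phi_u\ge M_u\ge M_s$ on $[s,t]$ because $\phi_u\le M_u$ and $u\mapsto M_u$ is nondecreasing. If $N_s\le M_s$, then $\phi_u\le N_s\le M_s$ on $[s,t]$, so the running maximum is frozen, $M_u\equiv M_s$, and $m_s=2M_s-\max_{s\le u\le t}\phi_u=2M_s-N_s$. If $N_s>M_s$, then since $\phi_s\le M_s<N_s=\max_{s\le u\le t}\phi_u$, the intermediate value theorem yields a first time $u_1\in[s,t]$ with $\phi_{u_1}=M_s$; as $\phi\le M_s$ on $[s,u_1]$ we have $M_{u_1}=M_s$ and hence $2M_{u_1}-\phi_{u_1}=M_s$, which together with the lower bound forces $m_s=M_s$. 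Both outcomes are summarized by $m_s=2M_s-(M_s\wedge N_s)$.

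With $m_s$ computed, \eqref{;iip} would follow at once: a one-line check in the two cases $N_s\le M_s$ and $N_s>M_s$ gives $\min\{m_s,M_s\vee N_s\}=M_s$, so the right-hand side of \eqref{;iip} collapses to $\phi_s-2M_s+2M_s=\phi_s$.

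For \eqref{;imp} it then remains to match $-2M_s+\min\{2m_s,\,2(M_s\vee N_s)-2\phi_t\}$ with $-\phi_t-|\phi_t+M_s-N_s|+|M_s-N_s|$; adding $\phi_s$ to both sides then reproduces the definition \eqref{;defcm} of $\cm(\phi)(s)$. Writing $a:=M_s$, $b:=N_s$ and substituting $m_s=a+(a-b)^+$, this is the elementary identity
\[
 2\min\bigl\{a+(a-b)^+,\,(a\vee b)-\phi_t\bigr\}=2a-\phi_t-|\phi_t+a-b|+|a-b|,
\]
which I would dispatch by splitting into the four regions fixed by the signs of $a-b$ and $\phi_t+a-b$; in each region both sides reduce to one of $2a$, $2a-2\phi_t$, $2b-2\phi_t$, $4a-2b$, and agree. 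This completes the plan.
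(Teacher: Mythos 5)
Your proof is correct, and it takes a genuinely different route from the paper's. The paper deduces \eqref{;iip} and \eqref{;imp} by passing to the limit $c\to\infty$ in the exact identities \eqref{;iipd} and \eqref{;impd} of Lemma~\ref{;lexpress}: there one shows that $\tfrac{1}{c}\log Z_{u}(c\phi)\to\cp(\phi)(u)$ uniformly on $[s,t]$ (via Dini's theorem, Lemma~\ref{;tdini}), hence that $\tfrac{1}{c}\log\int_{s}^{t}\{Z_{u}(c\phi)\}^{-2}\,du\to-2\min_{s\le u\le t}\cp(\phi)(u)$, and then concludes with the Laplace-type Lemma~\ref{;lfund} together with the already-established convergence $\ctc(\phi)(s)\to\cm(\phi)(s)$ from Lemma~\ref{;lconvs}(ii). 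You instead verify both identities pathwise, with no analytic input beyond the intermediate value theorem: the single substantive ingredient is your formula $m_{s}=2M_{s}-(M_{s}\wedge N_{s})$ for the future minimum of $\cp(\phi)$, proved by the monotonicity bound $\cp(\phi)(u)\ge M_{u}\ge M_{s}$ and, when $N_{s}>M_{s}$, by exhibiting a time $u_{1}$ with $M_{u_{1}}=\phi_{u_{1}}=M_{s}$ (your handling of the boundary situation $\phi_{s}=M_{s}$, where $u_{1}=s$, is sound). After that, \eqref{;iip} collapses immediately, and \eqref{;imp} reduces to a four-region check of an elementary identity in $a=M_{s}$, $b=N_{s}$, $\phi_{t}$, which does hold: the two sides equal $2a-2\phi_{t}$, $4a-2b$, $2b-2\phi_{t}$, $2a$ in the respective regions. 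Like the paper's proof, yours requires no assumption $\phi_{0}=0$, so it covers the proposition in full generality. What each approach buys: yours is more elementary and entirely self-contained, needing none of Lemmas~\ref{;lexpress}, \ref{;lfund}, \ref{;tdini} or \ref{;lconvs}; the paper's, at the cost of that machinery, exhibits the proposition as one more instance of the zero-temperature ($c\to\infty$) degeneration that runs through the whole paper---the same mechanism by which $\ctc$ degenerates to $\cm$ and $\tfrac1c\log Z(c\,\cdot)$ to $\cp$---so the identities \eqref{;iip}--\eqref{;imp} appear there as tropical shadows of the exact exponential-functional identities \eqref{;iipd}--\eqref{;impd}, a structural point your direct computation does not illuminate.
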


The former relation~\eqref{;iip} shows that we can reconstruct the original 
path $\phi $ from $\cp (\phi )$ and $\phi _{t}$, which is of independent interest. 
We will see in \sref{;sppexpress} that the invariance in law of Brownian motion 
under $\cm $ can also be explained by the above two relations~\eqref{;iip} and 
\eqref{;imp}, together with the celebrated Pitman's $2M-X$ theorem \cite{jwp} 
on one-dimensional Brownian motion and the three-dimensional Bessel process. 

Pitman's transformation and its generalizations play an important role 
in the study of eigenvalues of random matrices of a certain class, and 
their relationship with the so-called Littelmann path model in representation 
theory has been investigated; see \cite{bbo, bj} and references therein. 
There is also a renewed interest in Pitman's or Pitman-type transformations 
in the context of probabilistic studies of classical integrable systems 
such as box-ball systems and the Toda lattice; see a series of recent papers 
\cite{ckst, cs1, cs2, cst1, cst2} by Croydon et al. We expect that our results 
will provide a new insight about that profound transformation.

We give an outline of the paper. In \sref{;sprf}, we prove \tsref{;tmain1} and 
\ref{;tmain2}. In \sref{;sptranss}, we investigate properties of the path 
transformations $\ctc $, $\cg $ and $\cm $ in 
\psref{;ppropctc}, \ref{;ppropcg} and \ref{;ppropcm}, respectively, and see how 
those of $\cm $ entail \cref{;ctmain2}. 
In \sref{;sppexpress}, after giving a proof of \pref{;pexpress}, we 
explore a connection between Pitman's theorem and the second 
identity in law in \tref{;tmain2}. To make the paper self-contained, we summarize 
some properties of the path transformation $\ct $ to be used in the proof 
of \pref{;ppropctc}, and provide their proofs in the appendix, in which 
we also give a brief proof of identity \eqref{;invw}. Two fundamental 
lemmas in calculus are appended as well.

\section{Proofs of \tsref{;tmain1} and \ref{;tmain2}}\label{;sprf}

In this section, we prove \tsref{;tmain1} and \ref{;tmain2}.

\subsection{Proof of \tref{;tmain1}}\label{;ssprf1}

In this subsection, we prove \tref{;tmain1}.

\begin{proof}[Proof of \tref{;tmain1}]
For each fixed $c>0$, by the scaling property of Brownian motion, the process 
$\bc =\{ \bc _{s}\} _{s\ge 0}$ defined by 
\begin{align*}
 \bc _{s}:=cB_{s/c^{2}}
\end{align*}
is also a Brownian motion. Then, by identity~\eqref{;invw} applied to $\bc $ with 
duration $c^{2}t$, we see that the process 
\begin{align}\label{;idenscl}
 B_{s/c^{2}}-\frac{1}{c}\log \left\{ 
 1+\frac{A_{s}(\bc )}{A_{c^{2}t}(\bc )}\left( 
 e^{2\bc _{c^{2}t}}-1
 \right) 
 \right\} ,\quad 0\le s\le c^{2}t,
\end{align}
is identical in law with $\{ B_{s/c^{2}}\} _{0\le s\le c^{2}t}$.
Note that a change of the variables yields   
\begin{align*}
 A_{s}(\bc )&=\int _{0}^{s}e^{2cB_{u/c^{2}}}\,du\\
 &=c^{2}A_{s/c^{2}}(cB)
\end{align*}
for every $0\le s\le c^{2}t$. Therefore, by the definition~\eqref{;defctc}  
of $\ctc $, the process in \eqref{;idenscl} is expressed as 
\begin{align*}
 \ct ^{(c)}(B)(s/c^{2}),\quad 0\le s\le c^{2}t,
\end{align*}
which proves the theorem.
\end{proof}

\subsection{Proof of \tref{;tmain2}}\label{;ssprf2}

This subsection is devoted to the proof of \tref{;tmain2}. Since the case 
of the transformation $\cg $, although handled just after 
the statement of \tref{;tmain2}, can also be discussed 
in the same way as that of $\cm $, we give proofs of both cases 
below. For each $c>0$, by the definition of $\ctc $, together with 
that of $\ct $ (recall \eqref{;defctc} and \eqref{;defct}, respectively), 
keep in mind the expression 
\begin{align}\label{;expctc}
 \ctc (\phi )(s)=\phi _{s}-\frac{1}{c}\log \left\{ 
 1+\frac{A_{s}(c\phi )}{A_{t}(c\phi )}\left( e^{2c\phi _{t}}-1\right) 
 \right\} ,\quad 0\le s\le t,
\end{align}
for every $\phi \in C([0,t];\R )$. We begin with 

\begin{lem}\label{;lconvs}
 For every $\phi \in C([0,t];\R )$, we have the following:
 \begin{itemize}
  \item[\thetag{i}] it holds that, for all $s\in [0,t]$,
  \begin{align}\label{;convg}
   \ctc(\phi )(s)\xrightarrow[c\downarrow 0]{}\cg (\phi )(s);
  \end{align}
  
  \item[\thetag{ii}] if, in addition, $\phi _{0}=0$, then it holds that, for all $s\in [0,t]$,
  \begin{align}\label{;convm}
   \ctc (\phi )(s)\xrightarrow[c\to \infty ]{}\cm (\phi )(s).
  \end{align}
 \end{itemize}
 Moreover, the above two convergences~\eqref{;convg} and \eqref{;convm} 
 are uniform.
\end{lem}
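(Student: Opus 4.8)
The plan is to deduce both pointwise convergences from the explicit expression~\eqref{;expctc} and then upgrade them to uniform ones; throughout I abbreviate $A_s(c\phi)=\int_0^s e^{2c\phi_u}\,du$ and write $M_s:=\max_{0\le u\le s}\phi_u$, $N_s:=\max_{s\le u\le t}\phi_u$, so that $\max\{M_s,N_s\}=M_t$. For part~(i) I would argue by a first-order expansion as $c\downarrow0$: since $\sup_{0\le u\le t}|e^{2c\phi_u}-1|\to0$, both $A_s(c\phi)\to s$ and $A_t(c\phi)\to t$ uniformly in $s$, so $A_s(c\phi)/A_t(c\phi)\to s/t$ uniformly; together with $(e^{2c\phi_t}-1)/c\to2\phi_t$ this shows that the bracketed quantity in~\eqref{;expctc} equals $1+\tfrac{2s}{t}\phi_t\,c+o(c)$ uniformly in $s$, whence $\ctc(\phi)(s)=\phi_s-\tfrac1c\log\{\cdots\}\to\phi_s-\tfrac{2s}{t}\phi_t=\cg(\phi)(s)$, which is~\eqref{;convg}.

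The substance of the lemma is part~(ii). First I would rewrite the bracket in~\eqref{;expctc} as a single ratio of integrals: using $A_t(c\phi)-A_s(c\phi)=\int_s^t e^{2c\phi_u}\,du$,
\[
 1+\frac{A_s(c\phi)}{A_t(c\phi)}\bigl(e^{2c\phi_t}-1\bigr)
 =\frac{\int_s^t e^{2c\phi_u}\,du+\int_0^s e^{2c(\phi_u+\phi_t)}\,du}{\int_0^t e^{2c\phi_u}\,du}.
\]
Applying Laplace's principle $\tfrac1{2c}\log\int_a^b e^{2c\psi_u}\,du\to\max_{a\le u\le b}\psi_u$ to the three integrals, the two terms in the numerator contribute $N_s$ and $M_s+\phi_t$ respectively in the $\tfrac1{2c}\log$-scale while the denominator contributes $M_t$, so that $\tfrac1c\log\{\cdots\}\to2\bigl(\max\{N_s,M_s+\phi_t\}-M_t\bigr)$ and hence $\ctc(\phi)(s)\to\phi_s-2\bigl(\max\{N_s,M_s+\phi_t\}-M_t\bigr)$.

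It then remains to identify this limit with $\cm(\phi)(s)$. Using $\max\{a,b\}=\tfrac12(a+b+|a-b|)$ on both $\max\{N_s,M_s+\phi_t\}$ and $M_t=\max\{M_s,N_s\}$, a direct computation gives
\[
 -2\bigl(\max\{N_s,M_s+\phi_t\}-\max\{M_s,N_s\}\bigr)
 =-\phi_t-\bigl|\phi_t+M_s-N_s\bigr|+\bigl|M_s-N_s\bigr|,
\]
which is exactly the correction term in the definition~\eqref{;defcm} of $\cm$; this establishes~\eqref{;convm}. The hypothesis $\phi_0=0$ enters through $M_s\ge0$ (and $M_t\ge0$), which is what makes the limit match $\cm(\phi)$ at the endpoints, in particular the boundary value $\cm(\phi)(t)=-\phi_t$.

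For uniform convergence I note that the denominator above does not depend on $s$ and $\tfrac1c\log\int_0^t e^{2c\phi_u}\,du\to2M_t$, so it suffices to prove $\tfrac1c\log P_c(s)\to2\Phi(s)$ uniformly, where $P_c(s)$ denotes the numerator and $\Phi(s):=\max\{N_s,M_s+\phi_t\}$. The bound $P_c(s)\le2t\,e^{2c\Phi(s)}$ is immediate and uniform. For the reverse bound I fix $\ve>0$ and use the uniform continuity of $\phi$ to choose $\delta>0$ controlling its oscillation on $\delta$-intervals; bounding a numerator integral below by the length of a near-maximizing subinterval times $e^{2c(\,\cdot\,-\ve)}$ then gives $\tfrac1{2c}\log P_c(s)\ge\Phi(s)-\ve+o(1)$, \emph{provided} the dominant term of $\Phi(s)$ is carried by an integral over an interval whose length is bounded below uniformly in $s$. \textbf{The main obstacle is exactly the uniformity near the endpoints} $s=0,t$, where one of the two intervals of integration degenerates. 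I expect to resolve it by the dichotomy that near $s=0$ one has $M_s\le\ve$ (this is where $\phi_0=0$ is used), so that $\Phi(s)$ differs from $N_s$ by at most $\ve$ and the $N_s$-contribution comes from the long interval $[s,t]$; symmetrically, near $s=t$ one has $N_s\le\phi_t+\ve$, so that $\Phi(s)$ differs from $M_s+\phi_t$ by at most $\ve$, carried by the long interval $[0,s]$; on the complementary middle range both intervals have length bounded below and the plain Laplace estimate applies. Since $\tfrac1{2c}\log(\text{length})\to0$ for a length bounded below, this yields the uniform lower bound and hence the uniformity of~\eqref{;convm}, the uniformity of~\eqref{;convg} being already contained in the estimates of part~(i).
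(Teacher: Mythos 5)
Your proof is correct, and its pointwise half is essentially the paper's: the same rewriting of the bracket in \eqref{;expctc} as $\bigl(e^{2c\phi_t}\int_0^s e^{2c\phi_u}\,du+\int_s^t e^{2c\phi_u}\,du\bigr)/A_t(c\phi)$, the same Laplace asymptotics (the paper isolates the needed additivity of $\max$ under $\tfrac1c\log$ of sums as \lref{;lfund}), and the same identification of the limit with $\cm (\phi )(s)$ via $2\max\{a,b\}=a+b+|a-b|$; the paper checks the endpoints $s=0,t$ separately by the direct computation \eqref{;bvs}, which is where $\phi_0=0$ enters there. The genuine difference is how uniformity is obtained. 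The paper proves no uniform Laplace estimates at all: it observes (\rref{;rlconvs}) that for each $c$ the function $s\mapsto\ctc (\phi )(s)-\phi_s$ is monotone --- nonincreasing if $\phi_t\ge 0$, nondecreasing if $\phi_t\le 0$, since $A_s(c\phi)/A_t(c\phi)$ is nondecreasing in $s$ and $e^{2c\phi_t}-1$ has a fixed sign --- while the limit functions $\cg (\phi )-\phi$ and $\cm (\phi )-\phi$ are continuous, and then invokes Dini's second theorem (\lref{;tdini}): pointwise convergence of monotone functions to a continuous limit is automatically uniform. This settles uniformity in \thetag{i} and \thetag{ii} simultaneously in a few lines. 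Your route instead makes the Laplace bounds quantitative and uniform in $s$: the upper bound $P_c(s)\le 2t\,e^{2c\Phi(s)}$ is trivial, and your three-case dichotomy (near $0$, near $t$, middle range) correctly addresses the real difficulty, namely that the interval carrying the dominant exponent can degenerate; your use of $\phi_0=0$ (through $M_s\ge 0$) in the near-$t$ case is exactly where it is needed, and since your estimates remain valid at $s=0,t$ themselves (the degenerate integral simply contributes nothing to the lower bound, while the algebraic identification of the limit with $\cm (\phi )$ holds for all $s$), the endpoint cases come for free rather than by separate computation. What your argument buys is a self-contained, explicitly quantitative modulus of uniform convergence expressed through the modulus of continuity of $\phi$; what it costs is bookkeeping that the monotonicity-plus-Dini observation renders unnecessary --- if you want to compress your write-up, verifying that monotonicity and citing Dini's second theorem would replace your entire final paragraph.
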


\begin{rem}\label{;rlconvs}
 Since, for every $c>0$, the function $\ctc (\phi )(s)-\phi _{s},\,0\le s\le t$, 
 is monotone (in fact, it is nonincreasing when $\phi _{t}\ge 0$ and nondecreasing 
 when $\phi _{t}\le 0$), the uniformity stated in the above lemma follows from 
 the fact that both of the limit functions 
 \begin{align}\label{;tftns}
  \cg (\phi )(s)-\phi _{s} \quad \text{and} \quad 
  \cm (\phi )(s)-\phi _{s},\quad 0\le s\le t, 
 \end{align}
 are continuous; see, e.g., \cite[pp.\ 81 and 270, Problem~127]{ps}, a fact 
 which is often referred to as Dini's second theorem. We append its statement 
 and proof in the appendix; see \lref{;tdini}. The graph of the former function in 
 \eqref{;tftns} connects the two points $(0,0)$ and $(t,-2\phi _{t})$ monotonically, 
 and, provided that $\phi _{0}=0$, so does that of the latter in view of 
 \eqref{;bvs} below. 
\end{rem}

\begin{proof}[Proof of \lref{;lconvs}]
In view of \eqref{;expctc}, assertion~\thetag{i} is immediate from the fact that 
$(1/x)\log (1+x)\xrightarrow[x\to 0]{}1$. As for assertion~\thetag{ii}, first 
observe that, when $\phi _{0}=0$, 
\begin{align}\label{;bvs}
 \ctc (\phi )(0)=\cm (\phi )(0)=0 \quad \text{and} \quad 
 \ctc (\phi )(t)=\cm (\phi )(t)=-\phi _{t}
\end{align}
for any $c>0$. Indeed, it is readily seen from \eqref{;expctc} that 
\begin{align}\label{;bvctc}
 \ctc (\phi )(0)=\phi _{0}=0 \quad \text{and} \quad 
 \ctc (\phi )(t)=-\phi _{t}.
\end{align}
On the other hand, by the definition~\eqref{;defcm} of $\cm $, 
\begin{align*}
 \cm (\phi )(0)&=\phi _{0}-\phi _{t}-\Bigl| 
 \phi _{t}+\phi _{0}-\max _{0\le u\le t}\phi _{u}
 \Bigr| +\Bigl| 
 \phi _{0}-\max _{0\le u\le t}\phi _{u}
 \Bigr| \\
 &=-\phi _{t}+\phi _{t}-\max _{0\le u\le t}\phi _{u}+\max _{0\le u\le t}\phi _{u}\\
 &=0,
\intertext{and}
 \cm (\phi )(t)&=\phi _{t}-\phi _{t}-\Bigl| 
 \phi _{t}+\max _{0\le u\le t}\phi _{u}-\phi _{t}
 \Bigr| +\Bigl| 
 \max _{0\le u\le t}\phi _{u}-\phi _{t}
 \Bigr| \\
 &=-\max _{0\le u\le t}\phi _{u}+\max _{0\le u\le t}\phi _{u}-\phi _{t}\\
 &=-\phi _{t},
\end{align*}
where, for the second line from the bottom, we have used 
$\max _{0\le u\le t}\phi _{u}\ge \phi _{0}=0$. 
Turning to the case $s\in (0,t)$, we rewrite, in expression~\eqref{;expctc}, 
\begin{align*}
 1+\frac{A_{s}(c\phi )}{A_{t}(c\phi )}\left( e^{2c\phi _{t}}-1\right) 
 &=\frac{1}{A_{t}(c\phi )}\left( 
 e^{2c\phi _{t}}\!\int _{0}^{s}e^{2c\phi _{u}}\,du+\int _{s}^{t}e^{2c\phi _{u}}\,du
 \right) ,
\end{align*}
and note that, as $c\to \infty $, 
\begin{align*}
 \frac{1}{c}\log \left( 
 e^{2c\phi _{t}}\!\int _{0}^{s}e^{2c\phi _{u}}\,du
 \right) \to 2\Bigl( 
 \phi _{t}+\max _{0\le u\le s}\phi _{u}
 \Bigr) , && 
 \frac{1}{c}\log 
 \int _{s}^{t}e^{2c\phi _{u}}\,du
 \to 2\max _{s\le u\le t}\phi _{u},
\end{align*}
as well as $(1/c)\log A_{t}(c\phi )\to 2\max _{0\le u\le t}\phi _{u}$, 
which may be seen from the general fact that, given a measure space, 
the $L^{p}$-norm ($p\ge 1$) of a measurable function defined on the space 
converges to its $L^{\infty }$-norm as $p\to \infty $ (see, e.g., \cite[p.~134, Problem~13.21]{sch}).
Combining these with \lref{;lfund}, we see that $\ctc (\phi )(s)$ 
converges to   
\begin{align*}
 \phi _{s}-2\max \Bigl\{ \phi _{t}+\max _{0\le u\le s}\phi _{u},\max _{s\le u\le t}\phi _{u}
 \Bigr\} +2\max _{0\le u\le t}\phi _{u}
\end{align*}
as $c\to \infty $. The last expression agrees with $\cm (\phi )(s)$ by noting the relations 
\begin{align*}
 2\max \{ a,b\} =a+b+|a-b|\ (a,b\in \R ) && \text{and} && 
 \max _{0\le u\le t}\phi _{u}
 =\max \Bigl\{ 
 \max _{0\le u\le s}\phi _{u},\max _{s\le u\le t}\phi _{u}
 \Bigr\} . 
\end{align*}
The proof of assertion~\thetag{ii} is completed. Finally, the fact that 
both of the two convergences~\eqref{;convg} and \eqref{;convm} are 
uniform is a consequence of \lref{;tdini} as noted in \rref{;rlconvs}. 
\end{proof}

We are in a position to prove \tref{;tmain2}.

\begin{proof}[Proof of \tref{;tmain2}]
Since both $\{ \cg (B)(s)\} _{0\le s\le t}$ and $\{ \cm (B)(s)\} _{0\le s\le t}$ 
are continuous processes, it suffices to show that their finite-dimensional 
distributions agree with those of $B$ up to time $t$. To this end, pick 
$n\in \N $, $0\le s_{1}<\cdots <s_{n}\le t$ and a bounded continuous 
function $f:\R ^{n}\to \R $ arbitrarily. Then, by \tref{;tmain1}, we have 
\begin{align*}
 \ex \!\left[ 
 f\bigl( 
 \ctc (B)(s_{1}),\ldots ,\ctc (B)(s_{n})
 \bigr) 
 \right] 
 =\ex \!\left[ 
 f(B_{s_{1}},\ldots ,B_{s_{n}})
 \right] 
\end{align*}
for any $c>0$. Thanks to the boundedness and continuity of $f$, the 
bounded convergence theorem entails that, as $c\downarrow 0$, the 
left-hand side converges to 
\begin{align*}
 \ex \!\left[ 
 f\bigl( 
 \cg (B)(s_{1}),\ldots ,\cg (B)(s_{n})
 \bigr) 
 \right] 
\end{align*}
by \lref{;lconvs}\thetag{i}, while as $c\to \infty $, it converges to 
\begin{align*}
 \ex \!\left[ 
 f\bigl( 
 \cm (B)(s_{1}),\ldots ,\cm (B)(s_{n})
 \bigr) 
 \right] 
\end{align*}
by \lref{;lconvs}\thetag{ii}. This concludes the proof of the theorem.
\end{proof}

\begin{rem}\label{;rmin}
By virtue of the symmetry of Brownian motion, the process given by 
$-\cm (-B)$, namely 
\begin{align*}
 B_{s}-B_{t}+\Bigl| 
 B_{t}+\min _{0\le u\le s}B_{u}-\min _{s\le u\le t}B_{u}
 \Bigr| 
 -\Bigl| 
 \min _{0\le u\le s}B_{u}-\min _{s\le u\le t}B_{u}
 \Bigr| ,\quad 0\le s\le t,
\end{align*}
is also a Brownian motion.
\end{rem}

\section{Properties of $\ctc $, $\cg $ and $\cm $}\label{;sptranss}

As was already referred to in \sref{;intro},
the path transformations $\ctc ,\,c>0$, $\cg $ and $\cm $ have 
two properties in common: one is that they are an involution and 
the other is the commutativity with time reversal. The aim of 
this section is to investigate properties of these transformations 
including the above-mentioned two, which we think are of interest in their 
own right; in particular, we reveal that the transformation $\cm $ 
preserves Pitman's transformation and see how \cref{;ctmain2} 
is deduced.

Following the notation in \cite{dmy} by Donati-Martin, Matsumoto and Yor, 
define the transformation $Z:C([0,\infty );\R )\to C([0,\infty );\R )$ by
\begin{align}\label{;defz}
 Z_{s}(\phi ):=e^{-\phi _{s}}A_{s}(\phi ),\quad s\ge 0,\ \phi \in C([0,\infty );\R ),
\end{align}
which admits the following relationship with the transformation $A$: 
\begin{align}\label{;deria}
 \frac{d}{ds}\frac{1}{A_{s}(\phi )}=-\frac{1}{\left\{ 
 Z_{s}(\phi )
 \right\} ^{2}},\quad s>0.
\end{align}
With $t>0$ kept fixed, as in \cite{har22, har22+}, we also denote by $R$ the time-reversal 
operator defined by
\begin{align}\label{;trev}
 R(\phi )(s):=\phi _{t-s}-\phi _{t},\quad 0\le s\le t,
 \ \phi \in C([0,t];\R ),
\end{align}
under which we know the invariance of the law of $B$ up to time $t$: 
\begin{align}\label{;itrev}
 \left\{ R(B)(s)\right\} _{0\le s\le t}
 \eqd 
 \{ B_{s}\} _{0\le s\le t}.
\end{align}
(Be aware that the usual time reversal of Brownian motion 
refers to $\{ -R (B)(s)\} _{0\le s\le t}$ in our notation.) 
In what follows, we denote by $\id $ the identity map on 
$C([0,t];\R )$ and restrict the transformation $Z$ to $C([0,t];\R )$. 
In the case $c=1$, namely the case of $\ct $, the following properties of 
$\ctc $ are proven in \cite[Proposition~2.1]{har22+}.

\begin{prop}\label{;ppropctc}
For every $c>0$, we have the following:
\begin{itemize}
 \item[\thetag{i}] $Z\bigl( c\ctc (\phi )\bigr) =Z(c\phi )$ for any $\phi \in C([0,t];\R )$;
 
 \item[\thetag{ii}] $\ctc \circ \ctc =\id $;
 
 \item[\thetag{iii}] for any $\phi \in C([0,t];\R )$ with $\phi _{0}=0$,
 \begin{align*}
   \bigl( R\circ \ctc \bigr) (\phi )=\bigl( \ctc \circ R\bigr) (\phi ).
 \end{align*}
\end{itemize}
\end{prop}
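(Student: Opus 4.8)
The plan is to reduce all three properties for general $c>0$ to their $c=1$ counterparts, namely to the corresponding properties of $\ct $ established in \cite[Proposition~2.1]{har22+} and recalled in the appendix. The single identity that drives the reduction is obtained by multiplying the defining relation \eqref{;defctc} through by $c$:
\begin{align*}
 c\,\ctc (\phi )=\ct (c\phi ),\quad \phi \in C([0,t];\R ).
\end{align*}

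For \thetag{i}, I would insert this identity on the left-hand side to get $Z\bigl( c\,\ctc (\phi )\bigr) =Z\bigl( \ct (c\phi )\bigr) $, and then apply the $c=1$ version of \thetag{i}, which reads $Z(\ct (\psi ))=Z(\psi )$, with $\psi =c\phi $; this gives $Z(\ct (c\phi ))=Z(c\phi )$, as claimed. For \thetag{ii}, composing $\ctc $ with itself and using the identity twice yields $\ctc \bigl( \ctc (\phi )\bigr) =\tfrac{1}{c}\ct \bigl( c\,\ctc (\phi )\bigr) =\tfrac{1}{c}\ct \bigl( \ct (c\phi )\bigr) $, and the $c=1$ involution $\ct \circ \ct =\id $ collapses the right-hand side to $\tfrac{1}{c}(c\phi )=\phi $.

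Part \thetag{iii} needs one extra elementary remark: the time-reversal operator $R$ of \eqref{;trev} is homogeneous, $R(c\phi )=c\,R(\phi )$, as is immediate from its definition, and the hypothesis $\phi _{0}=0$ passes to $c\phi $. With these I would write $R\bigl( \ctc (\phi )\bigr) =\tfrac{1}{c}R\bigl( \ct (c\phi )\bigr) $ and $\ctc \bigl( R(\phi )\bigr) =\tfrac{1}{c}\ct \bigl( c\,R(\phi )\bigr) =\tfrac{1}{c}\ct \bigl( R(c\phi )\bigr) $, and then invoke the $c=1$ commutativity $R\circ \ct =\ct \circ R$ (valid on paths vanishing at the origin) applied to $c\phi $ to identify the two right-hand sides.

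Since each assertion collapses in essentially one line to the already-established $c=1$ case, there is no genuine analytic obstacle; the only points demanding attention are the bookkeeping ones---correctly tracking the factors $c$ and $1/c$, and verifying that the hypotheses of the $c=1$ statements, in particular $\phi _{0}=0$ for \thetag{iii}, survive the substitution $\phi \mapsto c\phi $.
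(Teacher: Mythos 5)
Your proposal is correct and follows essentially the same route as the paper's own proof: both reduce each assertion to the corresponding property of $\ct $ via the identity $c\,\ctc (\phi )=\ct (c\phi )$, using the homogeneity of $R$ (and the fact that $\phi _{0}=0$ passes to $c\phi $) for part~\thetag{iii}. No gaps.
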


These properties of each $\ctc $ are immediate from those of $\ct $ 
investigated in \cite{har22+}, which, for the reader's convenience, we 
put in \lref{;la1} in the appendix with proof.

\begin{proof}[Proof of \pref{;ppropctc}]
\thetag{i} By the definition~\eqref{;defctc} of $\ctc $ and by \lref{;la1}\thetag{i},
\begin{align*}
 Z\bigl( c\ctc (\phi )\bigr) &=Z(\ct (c\phi ))\\
 &=Z(c\phi ).
\end{align*}

\thetag{ii} For any $\phi \in C([0,t];\R )$, we have, by the definition~\eqref{;defctc} 
of $\ctc $,
\begin{align*}
 \ctc \bigl( 
 \ctc (\phi )
 \bigr) &=\frac{1}{c}\ct (\ct (c\phi ))\\
 &=\frac{1}{c}\cdot c\phi \\
 &=\phi ,
\end{align*}
where we used \lref{;la1}\thetag{ii} for the second line.

\thetag{iii} Let $\phi \in C([0,t];\R )$ be such that $\phi _{0}=0$. 
By the definition~\eqref{;defctc} of $\ctc $, since $R $ is a 
linear transformation, 
\begin{align*}
 R\bigl( \ctc (\phi )\bigr) &=\frac{1}{c}R(\ct (c\phi ))\\
 &=\frac{1}{c}\ct (R(c\phi ))\\
 &=\frac{1}{c}\ct (cR(\phi )),
\end{align*}
which is $\ctc \bigl( R(\phi )\bigr) $ by definition. Here we used 
\lref{;la1}\thetag{iii} for the second line and the linearity of $R$ 
again for the third.
\end{proof}

We turn to the transformation $\cg $.

\begin{prop}\label{;ppropcg}
The following hold: 
\begin{itemize}
 \item[\thetag{i}] $\cg \circ \cg =\id $;
 
 \item[\thetag{ii}] for any $\phi \in C([0,t];\R )$ with $\phi _{0}=0$,
 \begin{align*}
  R(\cg (\phi ))=\cg (R(\phi )).
 \end{align*}
\end{itemize}
\end{prop}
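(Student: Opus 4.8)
The plan is to prove both statements by direct substitution into the definition \eqref{;defcg} of $\cg$, exploiting the fact that $\cg$ is affine in $\phi$; no tool beyond elementary algebra is required, so I anticipate no genuine obstacle.

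For part \thetag{i}, the key preliminary observation is that $\cg$ sends any path to one with terminal value $-\phi_t$, since \eqref{;defcg} gives $\cg(\phi)(t) = \phi_t - 2\phi_t = -\phi_t$. Writing $\psi := \cg(\phi)$ and applying $\cg$ a second time, the correction term $(2s/t)\psi_t$ becomes $-(2s/t)\phi_t$, which exactly cancels the $-(2s/t)\phi_t$ already present in $\psi_s$. Hence $\cg(\cg(\phi))(s) = \phi_s$ for all $s\in[0,t]$, proving $\cg \circ \cg = \id$.

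For part \thetag{ii}, I would expand both sides using \eqref{;defcg} and the definition \eqref{;trev} of $R$, then check that they agree. The left-hand side $R(\cg(\phi))(s) = \cg(\phi)(t-s) - \cg(\phi)(t)$ simplifies, using $\cg(\phi)(t) = -\phi_t$, to an expression of the form $\phi_{t-s} + (2s/t)\phi_t - \phi_t$. The right-hand side $\cg(R(\phi))(s) = R(\phi)(s) - (2s/t)R(\phi)(t)$ simplifies to the same thing, provided one uses $R(\phi)(s) = \phi_{t-s} - \phi_t$ and, crucially, $R(\phi)(t) = \phi_0 - \phi_t = -\phi_t$.

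The computation is routine throughout, so there is no real obstacle; the single place demanding attention is part \thetag{ii}, where the hypothesis $\phi_0 = 0$ is genuinely used — it is precisely what yields $R(\phi)(t) = -\phi_t$ and thereby forces the two sides to coincide. This mirrors the role the same normalization plays in \pref{;ppropctc}\thetag{iii}.
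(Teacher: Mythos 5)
Your proposal is correct and follows essentially the same route as the paper: both parts are proved by direct substitution into the definitions \eqref{;defcg} and \eqref{;trev}, with \thetag{i} resting on the cancellation driven by $\cg (\phi )(t)=-\phi _{t}$, and \thetag{ii} using the hypothesis $\phi _{0}=0$ exactly where you say it is needed, namely to get $R(\phi )(t)=-\phi _{t}$.
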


We may infer those properties of $\cg $ from \pref{;ppropctc}, 
but direct proofs are of course possible as seen below.

\begin{proof}[Proof of \pref{;ppropcg}]
\thetag{i} For every $\phi \in C([0,t];\R )$ and $s\in [0,t]$, we have, by the 
definition~\eqref{;defcg} of $\cg $, 
\begin{align*}
 \cg (\cg (\phi ))(s)&=\cg (\phi )(s)-\frac{2s}{t}\cg (\phi )(t)\\
 &=\phi _{s}-\frac{2s}{t}\phi _{t}-\frac{2s}{t}\cdot (-\phi _{t})\\
 &=\phi _{s}.
\end{align*}

\thetag{ii} By the definition~\eqref{;trev} of $R$, for every $s\in [0,t]$,
\begin{align*}
 R(\cg (\phi ))(s)&=\cg (\phi )(t-s)-\cg (\phi )(t)\\
 &=\phi _{t-s}-\frac{2(t-s)}{t}\phi _{t}+\phi _{t},
\end{align*}
which, when $\phi _{0}=0$, is equal to 
\begin{align*}
 \phi _{t-s}-\phi _{t}-\frac{2s}{t}(\phi _{0}-\phi _{t})
 &=R(\phi )(s)-\frac{2s}{t}R(\phi )(t)\\
 &=\cg (R(\phi ))(s)
\end{align*}
as claimed.
\end{proof}

\sloppy 
Regarding Pitman's transformation $\cp $ defined in \eqref{;defcp} 
as a transformation on $C([0,t];\R )$, we have 

\begin{prop}\label{;ppropcm}
Restricted to the space of $\phi $'s in $C([0,t];\R )$ satisfying $\phi _{0}=0$, 
the transformation $\cm $ enjoys the following properties:
\begin{align*}
 \thetag{i}\ \cp \circ \cm =\cp ; && 
 \thetag{ii}\ \cm \circ \cm =\id ; && 
 \thetag{iii}\ R\circ \cm =\cm \circ R.
\end{align*}
\end{prop}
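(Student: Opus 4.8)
The plan is to establish all three identities by working directly from the explicit formula~\eqref{;imp} in \pref{;pexpress}, which expresses $\cm(\phi)$ purely in terms of Pitman's transformation $\cp(\phi)$ and the terminal value $\phi_t$. This seems the most economical route because \eqref{;iip} and \eqref{;imp} have nearly identical structure, differing only in the sign of $\phi_t$ inside the inner minimum; so the three claims should reduce to bookkeeping about how $\cp$ and the terminal value transform under $\cm$.

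First I would record two elementary facts under the standing assumption $\phi_0=0$. From \eqref{;bvs} we have $\cm(\phi)(t)=-\phi_t$, and applying \eqref{;defcm} at $s=0$ confirms $\cm(\phi)(0)=0$, so $\cm$ preserves the subspace on which we are working. The crux of the whole proposition is part~\thetag{i}, namely $\cp(\cm(\phi))=\cp(\phi)$: once this is known, parts \thetag{ii} and \thetag{iii} follow quickly. For \thetag{i} I would compute $\cp(\cm(\phi))$ from its definition~\eqref{;defcp} applied to the function $\cm(\phi)$, using the representation~\eqref{;imp}. The right-hand side of \eqref{;imp} has the shape $-\cp(\phi)(s)+\min\{\,2\min_{s\le u\le t}\cp(\phi)(u),\,\cp(\phi)(t)-\phi_t\,\}$; the point is that the running maximum $\max_{0\le u\le s}\cm(\phi)(u)$ should telescope against the $-\cp(\phi)(s)$ term and the nondecreasing quantity $\min_{s\le u\le t}\cp(\phi)(u)$, leaving exactly $\cp(\phi)(s)$. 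I expect this to be the main obstacle: one must carefully track which of the two arguments in the inner $\min$ is active as $s$ varies, and verify that the maximum over $[0,s]$ of the expression in \eqref{;imp} combines with $-\cm(\phi)(s)$ to reproduce $2\max_{0\le u\le s}\phi_u-\phi_s=\cp(\phi)(s)$.

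Granting \thetag{i}, part~\thetag{ii} is immediate from \pref{;pexpress}: applying \eqref{;imp} to $\cm(\phi)$ in place of $\phi$, and using $\cp(\cm(\phi))=\cp(\phi)$ together with $\cm(\phi)(t)=-\phi_t$, the formula for $\cm(\cm(\phi))(s)$ becomes $-\cp(\phi)(s)+\min\{2\min_{s\le u\le t}\cp(\phi)(u),\cp(\phi)(t)-(-\phi_t)\}$, which is precisely the right-hand side of \eqref{;iip}, i.e.\ $\phi_s$. Thus $\cm\circ\cm=\id$. For part~\thetag{iii}, the commutativity with time reversal, I would instead argue from the original definition~\eqref{;defcm}: expanding $R(\cm(\phi))(s)=\cm(\phi)(t-s)-\cm(\phi)(t)$ via \eqref{;defcm} and \eqref{;trev}, and separately expanding $\cm(R(\phi))(s)$, one checks the two coincide. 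The key observation here is that under $R$ the roles of the past maximum $\max_{0\le u\le s}$ and the future maximum $\max_{s\le u\le t}$ are interchanged (since $R(\phi)(s)=\phi_{t-s}-\phi_t$ reverses the time axis), and the symmetric appearance of these two maxima in \eqref{;defcm}, together with the relation $\cm(\phi)(t)=-\phi_t$, should make the two sides match after the substitution. I anticipate this final verification to be routine once the maxima are correctly relabeled, with the only care needed being the additive constant $\phi_t$ introduced by $R$.
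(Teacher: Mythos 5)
Your overall architecture is legitimate and, importantly, not circular: the paper's proof of \pref{;pexpress} borrows only the sandwich \emph{reasoning} from the proof of property~\thetag{i}, not the property itself, so one may indeed establish \pref{;pexpress} first and then deduce \pref{;ppropcm} from it. Your treatment of \thetag{ii} (apply \eqref{;imp} to $\cm (\phi )$, use \thetag{i} together with $\cm (\phi )(t)=-\phi _{t}$, and compare with \eqref{;iip}) is exactly the alternative argument the paper records in \rref{;rinvol}, and your treatment of \thetag{iii} by expanding both sides via \eqref{;defcm} and \eqref{;trev} and relabeling the past and future maxima is the paper's own computation.

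The problem is \thetag{i}, which you yourself call ``the main obstacle'' and then do not prove. Nothing in your proposal actually shows that the running maximum of \eqref{;imp} ``telescopes''; this is not bookkeeping. Concretely, writing $P=\cp (\phi )$, $m(u)=\min _{u\le v\le t}P(v)$ and $K=P(t)-\phi _{t}$, identity \eqref{;imp} reads $\cm (\phi )(u)=-P(u)+\min \{ 2m(u),K\} $, and property~\thetag{i} is equivalent to the assertion
\begin{align*}
 \max _{0\le u\le s}\bigl[ -P(u)+\min \{ 2m(u),K\} \bigr]
 =\min \Bigl\{ m(s),\tfrac{K}{2}\Bigr\} ,\quad 0\le s\le t.
\end{align*}
The inequality $\le $ follows easily from $P\ge m$ and the monotonicity of $m$, but the reverse inequality requires exhibiting a time $u\le s$ at which this maximum is attained, and that forces a genuine case analysis (whether the past maximum $\max _{0\le u\le s}\phi _{u}$ or the future maximum $\max _{s\le u\le t}\phi _{u}$ dominates, and which argument of the inner minimum is active at that time); none of this appears in your proposal, and since your proof of \thetag{ii} rests on \thetag{i}, the gap propagates to \thetag{ii} as well. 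For comparison, the paper avoids this computation entirely: it proves \thetag{i} by letting $c\to \infty $ in the identity $Z_{s}\bigl( c\,\ctc (\phi )\bigr) =Z_{s}(c\phi )$ of \pref{;ppropctc}\thetag{i}, using $\frac{1}{c}\log Z_{s}(c\psi )\to \cp (\psi )(s)$ on the right-hand side, and the uniform convergence $\ctc (\phi )\to \cm (\phi )$ (\lref{;lconvs} together with \lref{;tdini}) plus an $\ve $-sandwich to identify the limit of the left-hand side as $\cp (\cm (\phi ))(s)$. Either carry out the case analysis above or switch to this limiting argument; as it stands, the crux of the proposition is asserted rather than proven.
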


\begin{proof}
We fix $\phi \in C([0,t];\R )$ and $s\in [0,t]$ arbitrarily below. 
By \pref{;ppropctc}\thetag{i}, we have
\begin{align}\label{;a}
 Z_{s}\bigl( c\ctc (\phi )\bigr) =Z_{s}(c\phi ).
\end{align}
Let $s$ be positive for the time being. As for the right-hand side of \eqref{;a}, 
in view of the definition~\eqref{;defz} of the transformation $Z$, 
\begin{equation}\label{;b}
 \begin{split}
  \frac{1}{c}\log Z_{s}(c\phi )&=\frac{1}{c}\log A_{s}(c\phi )-\phi _{s}\\
  &\xrightarrow[c\to \infty ]{}\cp (\phi )(s).
 \end{split}
\end{equation}
On the other hand, as for the left-hand side, pick $\ve >0$ arbitrarily. 
Then, assuming that $\phi _{0}=0$, we have, for every sufficiently large 
$c$, 
\begin{align*}
 \max _{0\le u\le t}\bigl| 
 \ctc (\phi )(u)-\cm (\phi )(u)
 \bigr| <\ve 
\end{align*}
since the pointwise convergence~\eqref{;convm} of $\ctc (\phi )$ to 
$\cm (\phi )$ is uniform, whence, by the 
definition~\eqref{;defz} of $Z$,
\begin{align*}
 e^{-2c\ve }e^{-c\ctc (\phi )(s)}A_{s}(c\cm (\phi ))
 \le Z_{s}\bigl( c\ctc (\phi )\bigr) \le 
 e^{2c\ve }e^{-c\ctc (\phi )(s)}A_{s}(c\cm (\phi )).
\end{align*}
It then follows that 
\begin{align*}
 -2\ve +\cp (\cm (\phi ))(s)&\le 
 \liminf _{c\to \infty }\frac{1}{c}\log Z_{s}\bigl( c\ctc (\phi )\bigr) \\
 &\le \limsup _{c\to \infty }\frac{1}{c}\log Z_{s}\bigl( c\ctc (\phi )\bigr) 
 \le 2\ve +\cp (\cm (\phi ))(s).
\end{align*}
Therefore, thanks to the arbitrariness of $\ve $, 
\begin{align*}
 \lim _{c\to \infty }\frac{1}{c}\log Z_{s}\bigl( c\ctc (\phi )\bigr) 
 =\cp (\cm (\phi ))(s),
\end{align*}
which, together with \eqref{;a} and \eqref{;b}, entails that 
\begin{align*}
 \cp (\cm (\phi ))(s)=\cp (\phi )(s)
\end{align*}
for any $0<s\le t$. Since both sides of the last relation are continuous 
at the origin, we have \thetag{i}.

We proceed to the proof of \thetag{iii} next, which we will do 
by a direct computation instead of appealing to 
\pref{;ppropctc}\thetag{iii}. By the definition~\eqref{;trev} of $R$, 
\begin{align*}
 R(\cm (\phi ))(s)&=\cm (\phi )(t-s)-\cm (\phi )(t)\\
 &=\cm (\phi )(t-s)+\phi _{t},
\end{align*}
where, supposing that $\phi _{0}=0$, the second line is due to 
\eqref{;bvs}. On the other hand, by the definition~\eqref{;defcm} 
of $\cm $, observe that 
\begin{align*}
 &\cm (R(\phi ))(s)\\
 &=R(\phi )(s)-R(\phi )(t)
 -\Bigl| 
 R(\phi )(t)+\max _{0\le u\le s}R(\phi )(u)-\max _{s\le u\le t}R(\phi )(u)
 \Bigr| \\
 &\quad +\Bigl| 
 \max _{0\le u\le s}R(\phi )(u)-\max _{s\le u\le t}R(\phi )(u)
 \Bigr| \\
 &=\phi _{t-s}-\phi _{t}-(\phi _{0}-\phi _{t})
 -\Bigl| 
 \phi _{0}-\phi _{t}+\max _{t-s\le u\le t}\phi _{u}
 -\max _{0\le u\le t-s}\phi _{u}
 \Bigr| \\
 &\quad +\Bigl| 
 \max _{t-s\le u\le t}\phi _{u}
 -\max _{0\le u\le t-s}\phi _{u}
 \Bigr| ,
\end{align*}
which is also $\cm (\phi )(t-s)+\phi _{t}$ when $\phi _{0}=0$.

Turning to the proof of \thetag{ii}, by the definition~\eqref{;defcm} of $\cm $, 
we have 
\begin{equation}\label{;c}
 \begin{split}
 &\cm (\cm (\phi ))(s)\\
 &=\cm (\phi )(s)-\cm (\phi )(t)
 -\Bigl| 
 \cm (\phi )(t)+\max _{0\le u\le s}\cm (\phi )(u)-\max _{s\le u\le t}\cm (\phi )(u)
 \Bigr| \\
 &\quad +\Bigl| 
 \max _{0\le u\le s}\cm (\phi )(u)-\max _{s\le u\le t}\cm (\phi )(u)
 \Bigr| .
\end{split}
\end{equation}
Since $\cm (\phi )(t)=-\phi _{t}$ when $\phi _{0}=0$, and 
\begin{align*}
 \max _{0\le u\le s}\cm (\phi )(u)-\max _{s\le u\le t}\cm (\phi )(u)
 =\max _{0\le u\le s}\phi _{u}-\max _{s\le u\le t}\phi _{u}+\phi _{t}
\end{align*}
by \thetag{i} and by \lref{;lrevp} below, we insert these into \eqref{;c} 
to get 
\begin{align*}
 &\cm (\cm (\phi ))(s)\\
 &=\cm (\phi )(s)+\phi _{t}-\Bigl| 
 -\phi _{t}+\max _{0\le u\le s}\phi _{u}-\max _{s\le u\le t}\phi _{u}+\phi _{t}
 \Bigr| 
 +\Bigl| 
 \max _{0\le u\le s}\phi _{u}-\max _{s\le u\le t}\phi _{u}+\phi _{t}
 \Bigr| \\
 &=\cm (\phi )(s)+\phi _{s}-\cm (\phi )(s)\\
 &=\phi _{s}
\end{align*}
as expected.
\end{proof}

The proof of \pref{;ppropcm} is completed once we establish the 
following lemma, which may be regarded as a complement to 
property~\thetag{i} in the proposition.

\begin{lem}\label{;lrevp}
It holds that, for every $\phi \in C([0,t];\R )$ with $\phi _{0}=0$,
\begin{align*}
 2\max _{s\le u\le t}\cm (\phi )(u)-\cm (\phi )(s)
 =2\max _{s\le u\le t}\phi _{u}-\phi _{s}-2\phi _{t},\quad 
 0\le s\le t.
\end{align*}
\end{lem}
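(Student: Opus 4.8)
The plan is to recognize this lemma as the future-maximum counterpart of property~\thetag{i} in \pref{;ppropcm}, and to derive it from that property by a time-reversal argument rather than by direct computation. Unwinding $\cp\circ\cm=\cp$ through the definition~\eqref{;defcp} of $\cp$, property~\thetag{i} reads
\begin{align*}
 2\max_{0\le u\le s}\cm(\phi)(u)-\cm(\phi)(s)=2\max_{0\le u\le s}\phi_u-\phi_s,
\end{align*}
which is exactly the stated identity but with $\max_{0\le u\le s}$ in place of $\max_{s\le u\le t}$ and without the additive $-2\phi_t$. Time reversal will interchange the past and future maxima and generate the extra terminal term.

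First I would apply the displayed form of property~\thetag{i} to the reversed path $R(\phi)$; this is legitimate since $R(\phi)(0)=\phi_t-\phi_t=0$. Next I would use property~\thetag{iii}, $\cm\circ R=R\circ\cm$, to replace $\cm(R(\phi))$ by $R(\cm(\phi))$, and then expand $R(\cm(\phi))(u)=\cm(\phi)(t-u)-\cm(\phi)(t)=\cm(\phi)(t-u)+\phi_t$ using the boundary value $\cm(\phi)(t)=-\phi_t$ recorded in \eqref{;bvs}. On the right-hand side I would likewise expand $R(\phi)(u)=\phi_{t-u}-\phi_t$. Under the substitution $v=t-u$, each maximum over $u\in[0,s]$ becomes a maximum over $v\in[t-s,t]$, so both sides turn into expressions involving $\cm(\phi)$ and $\phi$ restricted to $[t-s,t]$, while the constant shifts $\pm\phi_t$ collect to $-2\phi_t$. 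Renaming $t-s$ as the free variable then yields precisely the asserted identity. Note that this route is consistent with the proof of \pref{;ppropcm}, since properties~\thetag{i} and \thetag{iii} are established there independently of \thetag{ii} (which is the part that invokes this lemma).

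The argument is essentially bookkeeping, so the main obstacle, though a minor one, will be tracking the additive constants produced by $R(\phi)(u)=\phi_{t-u}-\phi_t$ and by $\cm(\phi)(t)=-\phi_t$, and verifying that the reversal sends $\max_{0\le u\le s}$ to $\max_{t-s\le v\le t}$ correctly. A direct verification is also possible: using $2\max\{a,b\}=a+b+|a-b|$ one rewrites $\cm(\phi)(s)=\phi_s+2\max_{0\le u\le t}\phi_u-2\max\{\phi_t+\max_{0\le u\le s}\phi_u,\max_{s\le u\le t}\phi_u\}$ and then locates the maximum of $\cm(\phi)$ over $[s,t]$ by a case analysis according to which term in the inner maximum dominates as $u$ varies. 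That route is considerably more involved, however, which is why I would favor the time-reversal derivation above.
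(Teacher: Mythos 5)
Your proposal is correct and takes essentially the same route as the paper's own proof: both derive the identity by applying properties~\thetag{i} and \thetag{iii} of \pref{;ppropcm} to the time-reversed path $R(\phi )$, using the boundary value $\cm (\phi )(t)=-\phi _{t}$ from \eqref{;bvs} and the change of variables $u\mapsto t-u$ in the maxima, with only the order of the bookkeeping differing. Your remark on non-circularity (that \thetag{i} and \thetag{iii} are established independently of \thetag{ii}, which is where the lemma is invoked) also matches the structure of the paper's argument.
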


\begin{proof}
For every $s\in [0,t]$, by property~\thetag{iii} in \pref{;ppropcm}, 
\begin{align*}
 \max _{0\le u\le t-s}R(\cm (\phi ))(u)
 =\max _{0\le u\le t-s}\cm (R(\phi ))(u).
\end{align*}
The left-hand side is equal to 
\begin{align*}
 \max _{0\le u\le t-s}\cm (\phi )(t-u)-\cm (\phi )(t)
 =\max _{s\le u\le t}\cm (\phi )(u)+\phi _{t},
\end{align*}
whereas the right-hand side is equal, by 
property~\thetag{i} in \pref{;ppropcm}, to 
\begin{align*}
 &\max _{0\le u\le t-s}R(\phi )(u)-\frac{1}{2}R(\phi )(t-s)+\frac{1}{2}\cm (R(\phi ))(t-s)\\
 &=\max _{0\le u\le t-s}(\phi _{t-u}-\phi _{t})-\frac{1}{2}(\phi _{s}-\phi _{t})
 +\frac{1}{2}\{ \cm (\phi )(s)+\phi _{t}\} \\
 &=\max _{s\le u\le t}\phi _{u}-\frac{1}{2}\phi _{s}+\frac{1}{2}\cm (\phi )(s),
\end{align*}
hence the claim. Here, for the second line, we used 
$\cm (R(\phi ))(t-s)=R(\cm (\phi ))(t-s)$ by \pref{;ppropcm}\thetag{iii} and 
$\cm (\phi )(t)=-\phi _{t}$ as in \eqref{;bvs}.
\end{proof}

\pref{;ppropcm} enables us to prove \cref{;ctmain2}.

\begin{proof}[Proof of \cref{;ctmain2}]
Given a nonnegative measurable function $F$ on $C([0,t];\R )$, 
consider a function $G$ of the form 
\begin{align*}
 G(\phi )=F(\phi )e^{-\mu \phi _{t}-\mu ^{2}t/2},\quad 
 \phi \in C([0,t];\R ).
\end{align*}
By \tref{;tmain2}, we have 
\begin{align*}
 \ex \!\left[ 
 G(\cm (B))
 \right] =\ex \!\left[ 
 G(B)
 \right] .
\end{align*}
By the fact that $\cm (B)(t)=-B_{t}$ in view of \eqref{;bvs}, the 
Cameron--Martin formula entails that 
\begin{align*}
 \ex \!\left[ 
 F\bigl( 
 \cm (\db{\mu })
 \bigr) 
 \right] =\ex \!\left[ 
 F\bigl( \db{-\mu }\bigr) 
 \right] .
\end{align*}
Therefore, thanks to the arbitrariness of $F$, 
\begin{align*}
 \bigl\{ \cm (\db{\mu })(s)\bigr\} _{0\le s\le t}
 \eqd \bigl\{ 
 \db{-\mu }_{s}
 \bigr\} _{0\le s\le t}.
\end{align*}
Hence we have the joint identity in law 
\begin{align*}
 &\left\{ 
 \bigl( 
 (\cm \circ \cm )(\db{\mu })(s),\cm (\db{\mu })(s),(\cp \circ \cm )(\db{\mu })(s)
 \bigr) 
 \right\} _{0\le s\le t}\\
 &\eqd 
 \left\{ 
 \bigl( 
 \cm (\db{-\mu })(s),\db{-\mu }_{s},\cp (\db{-\mu })(s)
 \bigr) 
 \right\} _{0\le s\le t},
\end{align*}
which is the claim since the left-hand side agrees with that of 
\eqref{;jointl} by \thetag{i} and \thetag{ii} of \pref{;ppropcm}.
\end{proof}

Let $\T $ be any of the transformations $\ctc $ (with any $c>0$), $\cg $ and $\cm $. 
Notice that $\T (B)(t)=-B_{t}$. We conclude this section with a remark on the 
commutativity of $\T $ with the time-reversal operator $R$ as observed in 
\psref{;ppropctc}--\ref{;ppropcm}. 

\begin{rem}\label{;rcons}
By the same argument as in the above proof, we have, for any $\mu \in \R $,
\begin{align}\label{;oppd}
 \bigl\{ \T (\db{\mu })(s)\bigr\} _{0\le s\le t}
 \eqd \bigl\{ 
 \db{-\mu }_{s}
 \bigr\} _{0\le s\le t},
\end{align}
which is consistent with the commutativity with $R$ 
as seen below: 
\begin{align*}
 \bigl\{ 
 (R\circ \T )(\db{\mu })(s)
 \bigr\} _{0\le s\le t}
 &\eqd \bigl\{ 
 R(\db{-\mu })(s)
 \bigr\} _{0\le s\le t}\\
 &\eqd \bigl\{ 
 \db{\mu }_{s}
 \bigr\} _{0\le s\le t}\\
 &\eqd \bigl\{ 
 \T (\db{-\mu })(s)
 \bigr\} _{0\le s\le t}\\
 &\eqd \bigl\{ 
 (\T \circ R)(\db{\mu })(s)
 \bigr\} _{0\le s\le t},
\end{align*}
where the first and third lines are due to \eqref{;oppd} and 
the second and fourth to \eqref{;itrev}; in fact, because of 
the commutativity, the leftmost and rightmost sides are identical.
\end{rem}

\section{Proof of \pref{;pexpress}}\label{;sppexpress}

In this section, we prove \pref{;pexpress}. We also reveal how the invariance 
in law of Brownian motion under the transformation $\cm $ is connected 
with Pitman's $2M-X$ theorem via the proposition. 

We begin with the following lemma: 
\begin{lem}\label{;lexpress}
For every $\phi \in C([0,t];\R )$ and $c>0$, it holds that 
\begin{align}
 \phi _{s}&=-\frac{1}{c}\log\left\{ 
 Z_{s}(c\phi )\int _{s}^{t}\frac{du}{\left( 
 Z_{u}(c\phi )
 \right) ^{2}}+\frac{Z_{s}(c\phi )}{Z_{t}(c\phi )}e^{-c\phi _{t}}
 \right\} ,\label{;iipd}\\
 \ctc (\phi )(s)&=-\frac{1}{c}\log\left\{ 
 Z_{s}(c\phi )\int _{s}^{t}\frac{du}{\left( 
 Z_{u}(c\phi )
 \right) ^{2}}+\frac{Z_{s}(c\phi )}{Z_{t}(c\phi )}e^{c\phi _{t}}
 \right\} ,\label{;impd}
\end{align}
for all $0<s\le t$.
\end{lem}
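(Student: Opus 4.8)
The plan is to reduce both identities to purely algebraic statements by exponentiating. Since each right-hand side has the form $-\frac{1}{c}\log(\cdot)$, identity~\eqref{;iipd} is equivalent to
\begin{align*}
 e^{-c\phi _{s}}=Z_{s}(c\phi )\int _{s}^{t}\frac{du}{\left( Z_{u}(c\phi )\right) ^{2}}+\frac{Z_{s}(c\phi )}{Z_{t}(c\phi )}e^{-c\phi _{t}},
\end{align*}
and \eqref{;impd} is equivalent to the same statement with $e^{-c\ctc(\phi)(s)}$ on the left and $e^{c\phi_{t}}$ replacing $e^{-c\phi_{t}}$ in the last term. The common engine of both computations is the integral $\int_{s}^{t}\left(Z_{u}(c\phi)\right)^{-2}\,du$, which I would evaluate first.

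Applying relation~\eqref{;deria} to $c\phi$ together with the fundamental theorem of calculus gives
\begin{align*}
 \int _{s}^{t}\frac{du}{\left( Z_{u}(c\phi )\right) ^{2}}
 =-\int _{s}^{t}\frac{d}{du}\frac{1}{A_{u}(c\phi )}\,du
 =\frac{1}{A_{s}(c\phi )}-\frac{1}{A_{t}(c\phi )}.
\end{align*}
Recalling $Z_{u}(c\phi)=e^{-c\phi_{u}}A_{u}(c\phi)$ and multiplying through by $Z_{s}(c\phi)$ then yields
\begin{align*}
 Z_{s}(c\phi )\int _{s}^{t}\frac{du}{\left( Z_{u}(c\phi )\right) ^{2}}
 =e^{-c\phi _{s}}\left( 1-\frac{A_{s}(c\phi )}{A_{t}(c\phi )}\right) .
\end{align*}
For \eqref{;iipd}, I would note that the remaining term collapses to $\frac{Z_{s}(c\phi)}{Z_{t}(c\phi)}e^{-c\phi_{t}}=e^{-c\phi_{s}}A_{s}(c\phi)/A_{t}(c\phi)$, so that on adding the two contributions the ratios $A_{s}(c\phi)/A_{t}(c\phi)$ cancel and the sum reduces to $e^{-c\phi_{s}}$, exactly as required. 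For \eqref{;impd}, the remaining term instead equals $\frac{Z_{s}(c\phi)}{Z_{t}(c\phi)}e^{c\phi_{t}}=e^{-c\phi_{s}}e^{2c\phi_{t}}A_{s}(c\phi)/A_{t}(c\phi)$; adding it to the integral contribution produces $e^{-c\phi_{s}}\bigl\{1+\frac{A_{s}(c\phi)}{A_{t}(c\phi)}(e^{2c\phi_{t}}-1)\bigr\}$, which by the defining expression~\eqref{;expctc} of $\ctc$ is precisely $e^{-c\ctc(\phi)(s)}$. Taking $-\frac{1}{c}\log$ in each case concludes the proof.

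I do not expect a genuine obstacle here, as the argument is essentially one explicit antiderivative followed by algebraic cancellation. The only ingredient of substance is the closed form for the integral supplied by~\eqref{;deria}, and the one thing to watch is bookkeeping: the functional identity must be applied to the argument $c\phi$ throughout (not $\phi$), and the two terminal factors $e^{\mp c\phi_{t}}$ must be matched correctly to \eqref{;iipd} and \eqref{;impd} respectively, since this sign in the exponent is exactly what distinguishes the reconstruction of $\phi$ from that of $\ctc(\phi)$.
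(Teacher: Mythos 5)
Your proof is correct and takes essentially the same route as the paper: both evaluate $\int_{s}^{t}\bigl(Z_{u}(c\phi )\bigr)^{-2}\,du=1/A_{s}(c\phi )-1/A_{t}(c\phi )$ via \eqref{;deria} applied to $c\phi $, multiply by $Z_{s}(c\phi )$, and finish by algebraic rearrangement (using \eqref{;expctc} for the second identity). The only difference is that you write out explicitly the final cancellations that the paper compresses into ``from which the assertion of the lemma follows readily.''
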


\begin{proof}
In view of \eqref{;deria}, we have, for every $0<s\le t$,  
\begin{align*}
 \int _{s}^{t}\frac{du}{\left\{ 
 Z_{u}(c\phi )
 \right\} ^{2}}=\frac{1}{A_{s}(c\phi )}-\frac{1}{A_{t}(c\phi )},
\end{align*}
and hence, by the definition~\eqref{;defz} of the transformation $Z$, 
\begin{align*}
 Z_{s}(c\phi )\int _{s}^{t}\frac{du}{\left\{ 
 Z_{u}(c\phi )
 \right\} ^{2}}=e^{-c\phi _{s}}-\frac{Z_{s}(c\phi )}{Z_{t}(c\phi )}e^{-c\phi _{t}},
\end{align*}
from which relation~\eqref{;iipd} follows. We apply \eqref{;iipd} to 
$\ctc (\phi )\in C([0,t];\R )$ to obtain \eqref{;impd} using 
\pref{;ppropctc}\thetag{i} and the fact that $\ctc (\phi )(t)=-\phi _{t}$ 
as in \eqref{;bvctc}.
\end{proof}

\begin{rem}
Both of the right-hand sides of \eqref{;iipd} and \eqref{;impd} 
converge to $\phi _{0}$ as $s\downarrow 0$ by the continuity of 
$\phi $ and $\ctc (\phi )$; recall that, as already seen in \eqref{;bvctc}, 
$\ctc (\phi )(0)=\phi _{0}$ for any $c>0$. 
\end{rem}

Using \lref{;lexpress}, we prove \pref{;pexpress}.

\begin{proof}[Proof of \pref{;pexpress}]
Given $\phi \in C([0,t];\R )$, let $\cs ^{1}(\phi )(s)$ (resp.\ $\cs ^{2}(\phi )(s)$) 
denote the right-hand side of \eqref{;iip} (resp.\ \eqref{;imp}) for every $0\le s\le t$. 
Because $\cs ^{1}(\phi )$ and $\cs ^{2}(\phi )$, as well as $\cm (\phi )$, 
are continuous functions on $[0,t]$, it suffices to 
prove the asserted relations 
\begin{align}\label{;arel}
 \phi _{s}=\cs ^{1}(\phi )(s), && \cm (\phi )(s)=\cs ^{2}(\phi )(s),
\end{align}
for $0<s<t$.

Fix $s\in (0,t)$ below. First we verify that, as $c\to \infty $, 
\begin{align}\label{;convmp}
 \frac{1}{c}\log \int _{s}^{t}\frac{du}{\left\{ 
 Z_{u}(c\phi )
 \right\} ^{2}}
 \to -2\min _{s\le u\le t}\cp (\phi )(u).
\end{align}
To this end, note that, as was observed in \eqref{;b}, 
\begin{align*}
 \frac{1}{c}\log Z_{u}(c\phi )\xrightarrow[c\to \infty ]{}\cp (\phi )(u)
\end{align*}
for every $u\in [s,t]$; moreover, this convergence is uniform 
thanks to \lref{;tdini} because, for each $c>0$, the function 
\begin{align*}
 \frac{1}{c}\log Z_{u}(c\phi )+\phi _{u}=\frac{1}{c}\log A_{u}(c\phi ),\quad 
 s\le u\le t,
\end{align*}
is nondecreasing and its limit function $\cp (\phi )(u)+\phi _{u},\,s\le u\le t$, 
is continuous. Fix $\ve >0$ arbitrarily and take $c$ so large that 
\begin{align*}
 \max _{s\le u\le t}\left| 
 \frac{1}{c}\log Z_{u}(c\phi )-\cp (\phi )(u)
 \right| <\ve .
\end{align*}
Writing 
$
1/\left\{ Z_{u}(c\phi )\right\} ^{2}
=\exp \left\{ 
-2c\cdot (1/c)\log Z_{u}(c\phi )
\right\} 
$, we then have the bounds 
\begin{align*}
 -2\ve +\frac{1}{c}\log \int _{s}^{t}\exp \left\{ 
 -2c\cp (\phi )(u)
 \right\} du 
 &<\frac{1}{c}\log \int _{s}^{t}\frac{du}{\left\{ 
 Z_{u}(c\phi )
 \right\} ^{2}}\\
 &<2\ve +\frac{1}{c}\log \int _{s}^{t}\exp \left\{ 
 -2c\cp (\phi )(u)
 \right\} du.
\end{align*}
Since 
\begin{align*}
 \frac{1}{c}\log \int _{s}^{t}\exp \left\{ 
 -2c\cp (\phi )(u)
 \right\} du \xrightarrow[c\to \infty ]{}-2\min _{s\le u\le t}\cp (\phi )(u)
\end{align*}
(see, e.g., \cite[p.~134, Problem~13.21]{sch}), we obtain \eqref{;convmp} by the 
same reasoning as in the proof of \pref{;ppropcm}\thetag{i}.

By relation~\eqref{;iipd}, 
\begin{align*}
 \phi _{s}=-\frac{1}{c}\log Z_{s}(c\phi )
 -\frac{1}{c}\log \left\{ 
 \int _{s}^{t}\frac{du}{\left( 
 Z_{u}(c\phi )
 \right) ^{2}}+\frac{e^{-c\phi _{t}}}{Z_{t}(c\phi )}
 \right\} ,
\end{align*}
which, by \eqref{;convmp} and \lref{;lfund}, converges to 
\begin{align*}
 &-\cp (\phi )(s)-\max \Bigl\{ 
 -2\min _{s\le u\le t}\cp (\phi )(u),-\phi _{t}-\cp (\phi )(t)
 \Bigr\} 
\end{align*}
as $c\to \infty $. The last expression agrees with $\cs ^{1}(\phi )(s)$ 
and proves the former relation in \eqref{;arel}. 
On the other hand, relation~\eqref{;impd} entails, in the same way as above, that 
\begin{align*}
 \ctc (\phi )(s)\xrightarrow[c\to \infty ]{}\cs ^{2}(\phi )(s).
\end{align*} 
We have observed in the proof of \lref{;lconvs}\thetag{ii} that, 
regardless of whether $\phi _{0}=0$ or not, 
\begin{align*}
 \ctc (\phi )(s)\xrightarrow[c\to \infty ]{}\cm (\phi )(s)
\end{align*}
for all $s\in (0,t)$. Therefore we also obtain the latter relation in 
\eqref{;arel}.
\end{proof}

\begin{rem}\label{;rinvol}
Property~\thetag{ii} in \pref{;ppropcm} may also be proven by using 
\pref{;pexpress} in such a way that, given $\phi \in C([0,t];\R )$ with 
$\phi _{0}=0$ and for every $0\le s\le t$, thanks to \eqref{;imp} and 
\pref{;ppropcm}\thetag{i}, 
\begin{align*}
 (\cm \circ \cm )(\phi )(s)=-\cp (\phi )(s)+\min \Bigl\{ 
 2\min _{s\le u\le t}\cp (\phi )(u),\cp (\phi )(t)-\cm (\phi )(t)
 \Bigr\} ,
\end{align*}
which, in view of \eqref{;iip}, agrees with $\phi _{s}$ because 
$\cm (\phi )(t)=-\phi _{t}$ as in \eqref{;bvs}.
\end{rem}

For each $a\ge 0$, let $\bes ^{a}=\{ \bes ^{a}_{s}\} _{s\ge 0}$ denote a 
three-dimensional Bessel process starting from $a$. 
Pitman's $2M-X$ theorem \cite[Theorem~1.3]{jwp} asserts that 
\begin{align}\label{;pit}
 \Bigl\{ 
 \Bigl( \cp (B)(s),\,\max _{0\le u\le s}B_{u}\Bigr) 
 \Bigr\} _{s\ge 0}\eqd 
 \Bigl\{ 
 \Bigl( 
 \bes ^{0}_{s},\,\inf _{u\ge s}\bes ^{0}_{u}
 \Bigr) 
 \Bigr\} _{s\ge 0}
\end{align} 
(refer to \cite[Theorem~VI.3.5]{ry} for the above form of the statement). In the rest of the section, 
with the help of \pref{;pexpress}, we discuss how identity~\eqref{;pit} explains the fact that 
\begin{align}\label{;invm}
 \{ \cm (B)(s)\} _{0\le s\le t}\eqd 
  \{ B_{s}\} _{0\le s\le t}
\end{align}
as shown in \tref{;tmain2}. 

Observe from \eqref{;pit} the identity in law 
\begin{align}\label{;idenp}
 \left( 
 \{ \cp (B)(s)\} _{0\le s\le t},\,B_{t}
 \right) \eqd 
 \Bigl( 
 \bigl\{ \bes ^{0}_{s}\bigr\} _{0\le s\le t},\,2\inf _{u\ge t}\bes ^{0}_{u}-\bes ^{0}_{t}
 \Bigr) .
\end{align}
Let $U$ denote a random variable subject to the uniform distribution 
on $(0,1)$. Because of the fact that, given $a>0$, $\inf _{s\ge 0}\bes ^{a}_{s}$ 
is distributed as $aU$ (see, e.g., \cite[Corollary~VI.3.4]{ry}), and by the 
Markov property of $\bes ^{0}$, the right-hand side of \eqref{;idenp} is 
identical in law with 
\begin{align*}
 \bigl( 
 \bigl\{ 
 \bes ^{0}_{s}
 \bigr\} _{0\le s\le t},\,(2U-1)\bes ^{0}_{t}
 \bigr) ,
\end{align*}
where $U$ is assumed to be independent of $\bes ^{0}$. Since 
$2U-1\eqd 1-2U$ (in fact, the random variable $2U-1$ is 
uniformly distributed on $(-1,1)$), we arrive at the symmetry of 
the law of $B_{t}$ given $\cp (B)$ up to time $t$: 
\begin{align}\label{;csym}
 \left( 
 \{ \cp (B)(s)\} _{0\le s\le t},\,B_{t}
 \right) 
 \eqd  \left( 
 \{ \cp (B)(s)\} _{0\le s\le t},\,-B_{t}
 \right) .
\end{align}
In particular, we have 
\begin{align*}
 &\Bigl\{ 
 -\cp (B)(s)+\min \Bigl\{ 
 2\min _{s\le u\le t}\cp (B)(u),\cp (B)(t)-B_{t}
 \Bigr\} 
 \Bigr\} _{0\le s\le t}\\
 &\eqd \Bigl\{ 
 -\cp (B)(s)+\min \Bigl\{ 
 2\min _{s\le u\le t}\cp (B)(u),\cp (B)(t)+B_{t}
 \Bigr\}  
 \Bigr\} _{0\le s\le t},
\end{align*}
which is \eqref{;invm} in view of \pref{;pexpress}.

\begin{rem}\label{;rpinv}
\thetag{1} If we denote by $\cm _{x}$ the transformation from 
$C([0,t];\R )$ to itself defined by 
\begin{align*}
 &\cm _{x}(\phi )(s)\\
 &:=-\cp (\phi )(s)+\min \Bigl\{ 
 2\min _{s\le u\le t}\cp (\phi )(u),\cp (\phi )(t)+x
 \Bigr\} ,\quad 0\le s\le t,\ \phi \in C([0,t];\R ),
\end{align*}
for each $x\in \R $, then the above argument entails the following 
disintegration formula for the Wiener measure on $C([0,t];\R )$, 
which is of interest in its own right: 
\begin{align*}
 \ex [F(B)]
 &=\ex \!\left[ 
 \int _{-\cp (\phi )(t)}^{\cp (\phi )(t)}
 \frac{dx}{2\cp (\phi )(t)}\,F\bigl( 
 \cm _{x}(\phi )
 \bigr) \Bigg| _{\phi =B}
 \right] \\
 &=\ex \!\left[ 
 \int _{-1}^{1}\frac{dx}{2}\,
 F\bigl( 
 \cm _{\cp (\phi )(t)x}(\phi )
 \bigr) \bigg| _{\phi =B}
 \right] 
\end{align*}
for every nonnegative measurable function $F$ on $C([0,t];\R )$.  
Here the second equality is due to a change of the variables. Moreover, 
if we consider the transformation 
\begin{align*}
\phi _{s}-\frac{1}{c}\log \left\{ 
 1+\frac{A_{s}(c\phi )}{A_{t}(c\phi )}\left( 
 e^{c\phi _{t}-cx}-1
 \right) 
 \right\} ,\quad 0\le s\le t,\ \phi \in C([0,t];\R ),
\end{align*}
then, by the same argument as in 
the proofs of \pref{;pexpress} and \lref{;lconvs}\thetag{ii}, each $\cm _{x}$ 
is seen to admit the following representation in terms of past and future maxima: 
\begin{align*}
 \cm _{x}(\phi )(s)
 =\phi _{s}-\frac{\phi _{t}-x}{2}-\left| 
 \frac{\phi _{t}-x}{2}+\max _{0\le u\le s}\phi _{u}-\max _{s\le u\le t}\phi _{u}
 \right| 
 +\Bigl| 
 \max _{0\le u\le s}\phi _{u}-\max _{s\le u\le t}\phi _{u}
 \Bigr| 
\end{align*}
for every $0\le s\le t$ and $\phi \in C([0,t];\R )$. We will return to the 
above observations elsewhere.

\thetag{2} In the same way as in the proof of \cref{;ctmain2}, we see 
from \eqref{;csym} that, for every $\mu \in \R $, 
\begin{align*}
 \bigl\{ 
 \cp (\db{\mu })(s) 
 \bigr\} _{0\le s\le t}
 \eqd  \bigl\{ 
 \cp (\db{-\mu })(s) 
 \bigr\} _{0\le s\le t},
\end{align*} 
which entails \eqref{;pinv} as $t>0$ is arbitrarily taken.
\end{rem}

\bigskip 
\noindent 
{\bf Acknowledgements.}  The author would like to thank the anonymous 
referees for their constructive comments.

\appendix 
\section*{Appendix}
\renewcommand{\thesection}{A}
\setcounter{equation}{0}
\setcounter{thm}{0}

Here,  based on \cite{har22+}, with providing their proofs, we list properties 
of $\ct $ recalled in \sref{;sptranss}. A proof of the identity~\eqref{;invw} 
in law is also given briefly. In addition, we state two fundamental lemmas 
used in the paper; although their proofs are standard, we also append them 
for the reader's convenience.

\subsection{Properties of $\ct $}

The following properties of the transformation $\ct $ are 
taken from \cite[Proposition~2.1]{har22+}.

\begin{lem}\label{;la1}
We have the following:
\begin{itemize}
 \item[\thetag{i}] $Z\circ \ct =Z$;
 
 \item[\thetag{ii}] $\ct \circ \ct =\id $;
 
 \item[\thetag{iii}] for any $\phi \in C([0,t];\R )$ with $\phi _{0}=0$,
 \begin{align*}
   (R\circ \ct )(\phi )=(\ct \circ R)(\phi ).
 \end{align*}
\end{itemize}
\end{lem}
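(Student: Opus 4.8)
The plan is to verify all three properties by direct computation, the engine being an explicit formula for how the additive functional $A$ transforms under $\ct $. Throughout, write $\Lambda _s:=1+\frac{A_s(\phi )}{A_t(\phi )}\bigl(e^{2\phi _t}-1\bigr)$, so that by \eqref{;defct} we have $\ct (\phi )(s)=\phi _s-\log \Lambda _s$ and hence $e^{2\ct (\phi )(s)}=e^{2\phi _s}/\Lambda _s^2$. The first step, and the computational heart of the whole lemma, is to establish
\begin{align*}
 A_s\bigl( \ct (\phi )\bigr) =\frac{A_s(\phi )}{\Lambda _s},\quad 0\le s\le t.
\end{align*}
This I would obtain by writing $A_s(\ct (\phi ))=\int _0^s e^{2\phi _u}/\Lambda _u^2\,du$ and performing the substitution $v=A_t(\phi )+(e^{2\phi _t}-1)A_u(\phi )$, for which $dv=(e^{2\phi _t}-1)e^{2\phi _u}\,du$ and $\Lambda _u=v/A_t(\phi )$; the integrand becomes a constant multiple of $v^{-2}$, and the elementary integration, with endpoints $v=A_t(\phi )$ and $v=A_t(\phi )\Lambda _s$, collapses to the stated expression. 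The degenerate case $\phi _t=0$, where $\Lambda \equiv 1$ and $\ct (\phi )=\phi $, is trivial and handled separately.

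Granting this identity, property \thetag{i} is immediate: by the definition \eqref{;defz} of $Z$,
\begin{align*}
 Z_s\bigl( \ct (\phi )\bigr) =e^{-\ct (\phi )(s)}A_s\bigl( \ct (\phi )\bigr)
 =\Lambda _s e^{-\phi _s}\cdot \frac{A_s(\phi )}{\Lambda _s}
 =e^{-\phi _s}A_s(\phi )=Z_s(\phi ).
\end{align*}
For property \thetag{ii}, I would first record the boundary value $\Lambda _t=e^{2\phi _t}$, whence $\ct (\phi )(t)=-\phi _t$, and correspondingly $A_t(\ct (\phi ))=A_t(\phi )e^{-2\phi _t}$ from the displayed identity. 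Setting $\psi =\ct (\phi )$ and feeding these data into the multiplicative correction $1+\frac{A_s(\psi )}{A_t(\psi )}(e^{2\psi _t}-1)$ that defines $\ct (\psi )$, a short simplification using $A_t(\phi )\Lambda _s=A_t(\phi )+(e^{2\phi _t}-1)A_s(\phi )$ shows this correction equals $1/\Lambda _s$, so that $\ct (\psi )(s)=\psi _s+\log \Lambda _s=\phi _s$, giving $\ct \circ \ct =\id $.

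Property \thetag{iii} will be the most delicate step, and is where the hypothesis $\phi _0=0$ enters. Here I would compute both sides explicitly. The left side is $(R\circ \ct )(\phi )(s)=\ct (\phi )(t-s)-\ct (\phi )(t)=\phi _{t-s}-\log \Lambda _{t-s}+\phi _t$, using $\ct (\phi )(t)=-\phi _t$ from the previous step. For the right side, with $\eta :=R(\phi )$ one has $\eta _t=-\phi _t$ and, via the substitution $w=t-u$, the reversal relation $A_s(\eta )=e^{-2\phi _t}\bigl( A_t(\phi )-A_{t-s}(\phi )\bigr) $, so that $A_s(\eta )/A_t(\eta )=\bigl( A_t(\phi )-A_{t-s}(\phi )\bigr) /A_t(\phi )$. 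Inserting these into the definition of $\ct (\eta )(s)$, the matching of the two sides reduces to checking that the ratio of the two logarithmic arguments equals $e^{-2\phi _t}$; multiplying its numerator by $e^{2\phi _t}$ turns it into its denominator, which closes the argument. The main obstacle throughout is purely bookkeeping: keeping the roles of $A_s(\phi )$, $A_t(\phi )$ and the factor $e^{2\phi _t}-1$ straight through the substitution in the key identity, with \thetag{iii} demanding the extra care of tracking the time-reversal change of variables.
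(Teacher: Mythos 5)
Your proof is correct; I checked the substitution yielding $A_{s}(\ct (\phi ))=A_{s}(\phi )/\Lambda _{s}$, the collapse of the correction factor to $1/\Lambda _{s}$ in part (ii), and the matching of the logarithmic arguments in part (iii), and all of them hold, with the degenerate case $\phi _{t}=0$ and the role of the hypothesis $\phi _{0}=0$ properly flagged. Your key identity is exactly the reciprocal form of the identity \eqref{;qa1} on which the paper's own proof rests, namely $1/A_{s}(\ct (\phi ))=1/A_{s}(\phi )+\bigl( e^{2\phi _{t}}-1\bigr) /A_{t}(\phi )$, which the paper asserts only with ``a direct computation shows''; your change of variables supplies that computation in full. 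The two proofs diverge in how parts (i) and (ii) are extracted from this identity. For (i), the paper differentiates \eqref{;qa1} in $s$ and invokes relation \eqref{;deria} to obtain $\{ Z_{s}(\ct (\phi ))\} ^{-2}=\{ Z_{s}(\phi )\} ^{-2}$, concluding by positivity of $Z$, whereas you evaluate $Z_{s}(\ct (\phi ))$ directly from the definition \eqref{;defz}: more elementary, with no differentiation step. For (ii), the paper appeals to the representation formulas \eqref{;iipd} and \eqref{;impd} of \lref{;lexpress} with $c=1$, combined with property (i) and $\ct (\phi )(t)=-\phi _{t}$; your route, feeding $\psi =\ct (\phi )$ back into \eqref{;defct} and showing that its correction factor equals $1/\Lambda _{s}$, is genuinely different and self-contained, bypassing \lref{;lexpress} (and hence \eqref{;deria}) entirely at the cost of a little more algebra. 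What the paper's route buys is structural economy: everything is channelled through the $Z$-process, consistent with how $\ct $ is analyzed elsewhere in the paper (cf.\ \rref{;rinvol}); what yours buys is a proof of the entire lemma from a single explicit transformation rule for $A$. Part (iii) is the same direct computation in both, hinging on the same algebraic identity between the two logarithmic arguments.
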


\begin{proof}
\thetag{i} In view of the definition~\eqref{;defct} of $\ct $, 
a direct computation shows that, for every $\phi \in C([0,t];\R )$, 
\begin{align*}
 \frac{1}{A_{s}(\ct (\phi ))}=\frac{1}{A_{s}(\phi )}
 +\frac{e^{2\phi _{t}}-1}{A_{t}(\phi )},\quad 0<s\le t.
\end{align*}
Noting relation~\eqref{;deria}, we differentiate both sides with respect to $s$ to get 
\begin{align*}
 \bigl\{ 
 Z_{s}(\ct (\phi ))
 \bigr\} ^{-2}=\{ Z_{s}(\phi )\} ^{-2},\quad 0<s\le t,
\end{align*}
which proves the claim by the positivity of $Z$.

\thetag{ii} We argue similarly to the reasoning in \rref{;rinvol}, 
appealing to relations~\eqref{;iipd} and \eqref{;impd} with $c=1$.
Note that $\ct =\ct ^{(1)}$ by definition. For every $\phi \in C([0,t];\R )$ 
and $0<s\le t$, we have, thanks to property~\thetag{i}, 
\begin{align*}
 (\ct \circ \ct )(\phi )(s)=-\log\left\{ 
 Z_{s}(\phi )\int _{s}^{t}\frac{du}{\left( 
 Z_{u}(\phi )
 \right) ^{2}}+\frac{Z_{s}(\phi )}{Z_{t}(\phi )}e^{\ct (\phi )(t)}
 \right\} ,
\end{align*}
which is seen to agree with $\phi _{s}$ by noting that $\ct (\phi )(t)=-\phi _{t}$.

\thetag{iii} For every $s\in [0,t]$, by the definition~\eqref{;trev} of $R$, 
\begin{align*}
 (R\circ \ct )(\phi )(s)=\phi _{t-s}-\log \left\{ 
 1+\frac{A_{t-s}(\phi )}{A_{t}(\phi )}\left( 
 e^{2\phi _{t}}-1
 \right) 
 \right\} +\phi _{t},
\end{align*}
whereas 
\begin{align*}
 (\ct \circ R)(\phi )(s)=\phi _{t-s}-\phi _{t}-\log \left\{ 
 1+\frac{A_{t}(\phi )-A_{t-s}(\phi )}{A_{t}(\phi )}\left( 
 e^{-2\phi _{t}}-1
 \right) 
 \right\} ,
\end{align*}
in which we have the equality 
\begin{align*}
 1+\frac{A_{t}(\phi )-A_{t-s}(\phi )}{A_{t}(\phi )}\left( 
 e^{-2\phi _{t}}-1
 \right) 
 =e^{-2\phi _{t}}\left\{ 
 1+\frac{A_{t-s}(\phi )}{A_{t}(\phi )}\left( 
 e^{2\phi _{t}}-1
 \right) 
 \right\} .
\end{align*}
Combining these leads to the conclusion.
\end{proof}

\subsection{Proof of \eqref{;invw}}

With slight abuse of notation, we simply 
write $Z_{s}=Z_{s}(B),\,s\ge 0$, in the sequel. We begin with the following 
observation asserting that the conditional law of $B_{t}$ 
given $\sigma (Z_{s},s\le t)$ is symmetric.

\begin{lem}\label{;lcsym}
 It holds that 
 \begin{align*}
 \left( 
 \{ Z_{s}\} _{0\le s\le t},\,B_{t}
 \right) \eqd 
 \left( 
 \{ Z_{s}\} _{0\le s\le t},\,-B_{t}
 \right) .
 \end{align*}
\end{lem}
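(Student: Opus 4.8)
The plan is to read the assertion as a statement of conditional symmetry and to reduce it, through one explicit involution, to a known description of the conditional law of the exponential functional $A_t$ given the trajectory of $Z$ up to time $t$. First I would note that, since $Z_t=e^{-B_t}A_t$ and hence $e^{-B_t}=Z_t/A_t$, keeping the whole path $\{Z_s\}_{0\le s\le t}$ fixed while replacing $B_t$ by $-B_t$ forces the terminal value $A_t$ to be replaced by $Z_te^{-B_t}=Z_t^2/A_t$. Thus the asserted identity in law is equivalent to the assertion that, conditionally on $\sigma(Z_s,\,s\le t)$, the law of $A_t$ is invariant under the involution $a\mapsto Z_t^2/a$ of $(0,\infty)$; equivalently, that $B_t=\log A_t-\log Z_t$ is conditionally symmetric, i.e.\ that $A_t$ is conditionally log-symmetric about $Z_t$.

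The substance then lies in identifying this conditional law. Here I would invoke the Matsumoto--Yor description of the functional $A_t$ given the past of the diffusion $Z$ (see \cite{dmy, mySI}): because $B$ carries no drift, the conditional distribution of $A_t$ given $\sigma(Z_s,\,s\le t)$ is a generalized inverse Gaussian law of index $0$, that is, one with conditional density proportional to $a^{-1}\exp\{-\tfrac12(\alpha a+\beta/a)\}$ on $(0,\infty)$, whose scale parameters satisfy $\sqrt{\beta/\alpha}=Z_t$. The vanishing of the index is exactly the signature of the absence of drift, and it is precisely this feature that we exploit.

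To conclude, I would record the elementary self-duality of the index-$0$ law: writing $a=Z_te^{y}$ in the density above and using $\sqrt{\beta/\alpha}=Z_t$ turns the exponent into $-\sqrt{\alpha\beta}\,\cosh y$ and the measure $da/a$ into $dy$, so the law of $y=\log(A_t/Z_t)$ is symmetric about $0$. This is exactly the invariance of the conditional law of $A_t$ under $a\mapsto Z_t^2/a$, and translating back yields the conditional symmetry of $B_t$, i.e.\ the lemma. It is worth observing that this runs in close parallel to the derivation of \eqref{;csym} in \sref{;sppexpress}: there the relevant diffusion is the Bessel process $\cp(B)$ and the symmetrizing randomization is the uniform law of $2U-1$ on $(-1,1)$, whereas here the Bessel process is replaced by the Matsumoto--Yor diffusion $Z$ and the uniform randomization by the index-$0$ generalized inverse Gaussian law.

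The main obstacle is the second step: establishing rigorously that the conditional law of $A_t$ given the entire path $\{Z_s\}_{s\le t}$ is generalized inverse Gaussian of index $0$ with log-center exactly $Z_t$. This rests on the full Matsumoto--Yor intertwining for $Z$, and some care is needed because the conditioning is on the whole trajectory rather than on the single value $Z_t$; however, only the index (which must vanish) and the log-center (which must equal $Z_t$) enter the symmetry argument, so the possibly path-dependent spread parameter is immaterial.
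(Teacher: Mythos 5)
Your proposal is correct and is essentially the paper's own proof written in different coordinates: the paper simply quotes Matsumoto--Yor's result (Proposition~1.7 of \cite{myPI}) that, given $\sigma (Z_{s},\,s\le t)$ with $Z_{t}=z$, the conditional law of $B_{t}$ has the even density $\frac{1}{2K_{0}(1/z)}\exp \left( -\frac{\cosh x}{z}\right) $, and your description of the conditional law of $A_{t}$ as a generalized inverse Gaussian law of index $0$ with $\sqrt{\beta /\alpha }=Z_{t}$ is exactly that same fact transported through the change of variables $A_{t}=Z_{t}e^{B_{t}}$ (here $\alpha =1/Z_{t}^{2}$, $\beta =1$, so the spread is in fact $1/Z_{t}$ rather than path-dependent). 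Both arguments rest on the identical external input --- the Matsumoto--Yor conditional law given the whole trajectory of $Z$, not just $Z_{t}$ --- followed by the same one-line symmetry observation, so the two routes coincide.
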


\begin{proof}
The assertion is an immediate consequence of the fact 
\cite[Proposition~1.7]{myPI} that, given $\sigma (Z_{s},s\le t)$ 
with $Z_{t}=z>0$, the conditional law of $B_{t}$ admits the 
symmetric density function  
\begin{align*}
 \frac{1}{2K_{0}(1/z)}\exp \left( 
 -\frac{\cosh x}{z}
 \right) ,\quad x\in \R ,
\end{align*}
with respect to the Lebesgue measure. 
Here $K_{0}$ is the modified Bessel function of the third kind 
(or the Macdonald function) of order $0$; see, e.g., \cite[Section~5.7]{leb} 
for its definition.
\end{proof}

Thanks to the Cameron--Martin formula and the injectivity of the Mellin 
transform, the above conditional symmetry of $B_{t}$ may also be deduced from 
the fact \cite[Theorem~1.6\thetag{ii}]{myPI} that, for any $\mu \in \R $, 
\begin{align*}
 \bigl\{ 
 Z_{s}(\db{\mu })
 \bigr\} _{s\ge 0}
 \eqd 
 \bigl\{ 
 Z_{s}(\db{-\mu })
 \bigr\} _{s\ge 0},
\end{align*}
which is an exponential analogue to identity~\eqref{;pinv} as discussed in 
\cite{myPI}.

We may compare \lref{;lcsym} with identity~\eqref{;csym}. The proof of \eqref{;invw} 
proceeds similarly to that of \eqref{;invm} exhibited in \sref{;sppexpress}.

\begin{proof}[Proof of \eqref{;invw}]
By relation~\eqref{;iipd} with $c=1$, 
\begin{align*}
 B_{s}=-\log \left( 
 Z_{s}\int _{s}^{t}\frac{du}{Z_{u}^{2}}+\frac{Z_{s}}{Z_{t}}e^{-B_{t}}
 \right) 
\end{align*}
for $0<s\le t$. Therefore, by \lref{;lcsym}, 
\begin{align*}
 \{ B_{s}\} _{0<s\le t}\eqd 
 \left\{ 
 -\log \left( 
 Z_{s}\int _{s}^{t}\frac{du}{Z_{u}^{2}}+\frac{Z_{s}}{Z_{t}}e^{B_{t}}
 \right) 
 \right\} _{0<s\le t},
\end{align*}
the right-hand side of which is identical with $\{ \ct (B)(s)\} _{0<s\le t}$ 
in view of \eqref{;impd}.
\end{proof}

\subsection{Two fundamental lemmas}\label{;sstfl}

The following lemma has been used in the proofs of \lref{;lconvs} and 
\pref{;pexpress}.

\begin{lem}\label{;lfund}
Let $\{ a_{n}\} _{n=1}^{\infty }$, $\{ b_{n}\} _{n=1}^{\infty }$ and 
$\{ c_{n}\} _{n=1}^{\infty }$ be three sequences of positive real numbers 
such that, as $n\to \infty $, 
\begin{align*}
 \frac{1}{c_{n}}\log a_{n}\to \al , &&  
 \frac{1}{c_{n}}\log b_{n}\to \beta , && 
 c_{n}\to \infty ,
\end{align*}
for some $\al ,\beta \in \R $. Then it holds that 
\begin{align*}
 \lim _{n\to \infty }\frac{1}{c_{n}}\log (a_{n}+b_{n})=\max\{ \al ,\beta \}. 
\end{align*}
\end{lem}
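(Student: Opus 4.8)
The plan is to bound the sum $a_{n}+b_{n}$ between $\max\{a_{n},b_{n}\}$ and $2\max\{a_{n},b_{n}\}$ and then pass to the limit by a squeeze argument after applying $(1/c_{n})\log(\cdot)$. First I would record the elementary two-sided estimate
\[
 \max\{a_{n},b_{n}\}\le a_{n}+b_{n}\le 2\max\{a_{n},b_{n}\},
\]
which is valid for every $n$ since $a_{n},b_{n}>0$. Taking $(1/c_{n})\log$ of each side preserves the inequalities, because $c_{n}>0$ and $\log$ is increasing, so the task reduces to controlling the single quantity $(1/c_{n})\log\max\{a_{n},b_{n}\}$ from above and below.

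The key observation is that, again by positivity of $1/c_{n}$ and monotonicity of $\log$, the factor $1/c_{n}$ distributes through the maximum:
\[
 \frac{1}{c_{n}}\log\max\{a_{n},b_{n}\}
 =\max\Bigl\{\frac{1}{c_{n}}\log a_{n},\,\frac{1}{c_{n}}\log b_{n}\Bigr\}.
\]
By hypothesis the two arguments on the right converge to $\al$ and $\beta$ respectively, so by continuity of the map $(x,y)\mapsto\max\{x,y\}$ the right-hand side tends to $\max\{\al,\beta\}$. Combined with the left inequality above, this yields the lower bound $\liminf_{n\to\infty}(1/c_{n})\log(a_{n}+b_{n})\ge\max\{\al,\beta\}$.

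For the matching upper bound I would use the right inequality together with the fact that $(1/c_{n})\log 2\to 0$, which holds precisely because $c_{n}\to\infty$. Indeed, $(1/c_{n})\log(a_{n}+b_{n})\le (1/c_{n})\log 2+\max\{(1/c_{n})\log a_{n},(1/c_{n})\log b_{n}\}$, and the right-hand side tends to $\max\{\al,\beta\}$, so $\limsup_{n\to\infty}(1/c_{n})\log(a_{n}+b_{n})\le\max\{\al,\beta\}$. Putting the two bounds together gives the asserted limit. There is no genuine obstacle in this argument; the only two points deserving a word of care are that $1/c_{n}$ may be moved inside the maximum (monotonicity of $\log$ and $c_{n}>0$), and that the normalising constant $\log 2$ is washed out in the limit, which is exactly where the assumption $c_{n}\to\infty$ enters.
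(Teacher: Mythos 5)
Your proof is correct, and it takes a cleaner route than the paper's own argument, though both are squeeze arguments at heart. The paper assumes without loss of generality that $\alpha\ge\beta$, fixes $\varepsilon>0$, traps $a_{n}$ and $b_{n}$ between $e^{(\alpha\pm\varepsilon)c_{n}}$ and $e^{(\beta\pm\varepsilon)c_{n}}$, factors out the dominant exponential to get
\[
 \alpha-\varepsilon+\frac{1}{c_{n}}\log\bigl\{1+e^{-(\alpha-\beta)c_{n}}\bigr\}
 <\frac{1}{c_{n}}\log(a_{n}+b_{n})
 <\alpha+\varepsilon+\frac{1}{c_{n}}\log\bigl\{1+e^{-(\alpha-\beta)c_{n}}\bigr\},
\]
and then sends $n\to\infty$ and $\varepsilon\downarrow 0$. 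Your decomposition $\max\{a_{n},b_{n}\}\le a_{n}+b_{n}\le 2\max\{a_{n},b_{n}\}$, combined with the observation that $(1/c_{n})\log$ distributes over the maximum and that $\max$ is continuous, eliminates both the WLOG reduction and all explicit $\varepsilon$-bookkeeping; the hypothesis $c_{n}\to\infty$ enters in exactly one place, to kill the constant $\log 2$, which makes the role of that assumption more transparent. What the paper's version buys is stylistic consistency with the places the lemma is applied (the Laplace-type estimates of $(1/c)\log$ of sums of exponential integrals in the proofs of Lemma 2.2 and Proposition 1.4, which are carried out with the same kind of $\pm\varepsilon$ exponential bounds), but logically the two proofs are equally rigorous and yours is shorter.
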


\begin{proof}
Without loss of generality, we may assume $\al \ge \beta $. Fix $\ve >0$ arbitrarily. 
By assumption, there exists a positive integer $N$ such that, for all $n\ge N$, 
\begin{align*}
 e^{(\al -\ve )c_{n}}<a_{n}<e^{(\al +\ve )c_{n}}, 
 && e^{(\beta -\ve )c_{n}}<b_{n}<e^{(\beta +\ve )c_{n}},
\end{align*}
from which we have 
\begin{align*}
 \al -\ve +\frac{1}{c_{n}}\log \left\{ 
 1+e^{-(\al -\beta )c_{n}}
 \right\} &<\frac{1}{c_{n}}\log (a_{n}+b_{n})\\
 &<\al +\ve +\frac{1}{c_{n}}\log \left\{ 
 1+e^{-(\al -\beta )c_{n}}
 \right\} .
\end{align*}
Therefore it follows that, by the assumption that $c_{n}\to \infty $ as $n\to \infty $, 
\begin{align*}
 \al -\ve &\le \liminf _{n\to \infty }\frac{1}{c_{n}}\log (a_{n}+b_{n})\\
 &\le \limsup _{n\to \infty }\frac{1}{c_{n}}\log (a_{n}+b_{n})\le \al +\ve ,
\end{align*}
which proves the claim as $\ve >0$ is arbitrary. 
\end{proof}

The following lemma is referred to in \rref{;rlconvs}, as well as in the 
proofs of \lref{;lconvs} and \pref{;pexpress}.

\begin{lem}\label{;tdini}
Given $a,b\in \R $ with $a<b$, let $\{ f_{n}\} _{n=1}^{\infty }$ be a 
sequence of real-valued nondecreasing functions on $[a,b]$ such that 
\begin{align*}
 \lim _{n\to \infty }f_{n}(s)=f(s)\quad \text{for all $s\in [a,b]$},
\end{align*}
with some continuous function $f$ on $[a,b]$. Then it holds that 
\begin{align*}
 \lim _{n\to \infty }\sup _{s\in [a,b]}\left| 
 f_{n}(s)-f(s)
 \right| =0.
\end{align*}
The same conclusion holds true if each $f_{n}$ is nonincreasing on $[a,b]$.
\end{lem}

\begin{proof}
We deal with the case that each $f_{n}$ is nondecreasing so that the 
limit function $f$ is also nondecreasing on $[a,b]$. Fix $\ve >0$ arbitrarily. 
By the uniform continuity of $f$, we may divide $[a,b]$ into 
subintervals $[x_{k-1},x_{k}],\,k=1,\ldots ,N$, so that 
\begin{align}\label{;qdini1}
 f(x_{k})-f(x_{k-1})<\ve ,\quad k=1,\ldots ,N.
\end{align}
Moreover, we may take $n$ so large that 
\begin{align*}
 \max _{0\le k\le N}\left| 
 f_{n}(x_{k})-f(x_{k})
 \right| <\ve .
\end{align*}
For every $x\in [a,b]$, pick a subinterval $[x_{k-1},x_{k}]$ containing $x$. 
Then we have, by the assumption that $f_{n}$ is nondecreasing, 
\begin{align*}
 f(x_{k-1})-\ve <f_{n}(x_{k-1})\le f_{n}(x)\le f_{n}(x_{k})<f(x_{k})+\ve .
\end{align*}
Because 
\begin{align*}
 f(x_{k})<f(x)+\ve && \text{and} && f(x)-\ve <f(x_{k-1})
\end{align*}
by \eqref{;qdini1}, we obtain $\left| f_{n}(x)-f(x)\right| <2\ve $. 
\end{proof}


\end{document}